\documentclass[reqno,11pt]{amsart}

\usepackage[top=2.0cm,bottom=2.0cm,left=3cm,right=3cm]{geometry}
\usepackage{amsthm,amsmath,amssymb,dsfont}
\usepackage{mathrsfs,amsfonts,functan,extarrows,mathtools,ulem}
\usepackage[colorlinks]{hyperref}
\usepackage{enumerate}
\usepackage{marginnote}
\usepackage{xcolor}
\usepackage{stmaryrd}
\usepackage{esint}
\usepackage{graphicx}
\usepackage{multirow}

\newtheorem{theorem}{Theorem}[section]

\newtheorem{thmpaux}{Theorem}  
\newenvironment{thmp}[1][]
{%
\begin{thmpaux}}
{\end{thmpaux}}

\newtheorem{thmppaux}{Theorem}  

\newtheorem{proposition}{Proposition}[section]

\newtheorem{remark}{Remark}[section]
\newtheorem{lemma}{Lemma}[section]

\numberwithin{equation}{section}
\allowdisplaybreaks

\arraycolsep=1.5pt
\linespread{1.1}

\def\p{\partial}

\def\vep{\varepsilon}

\def\ga{\gamma}
\def\Ga{\Gamma}
\def\Om{\Omega}
\def\om{\omega}
\def\al{\alpha}
\def\th{\theta}

\def\ka{\kappa}
\def\de{\delta}

\def\n{\mathbf{n}}

\def\R{\mathbb{R}}

\def\vphi{\varphi}
\def\vep{\varepsilon}

\def\div{\mathrm{div\,}}
\def\curl{\mathrm{curl}}

\def\v{\mathrm{v}}
\def\w{\mathrm{w}}

\def\u{\mathbf{u}}
\def\v{\mathbf{v}}
\def\w{\mathbf{w}}
\def\e{\mathbf{e}}

\def\L{\mathcal{L}}

\def\ml{\mathcal}
\def\mb{\mathbb}
\def\mf{\mathbf}

\newcommand{\Rmnum}[1]{\uppercase\expandafter{\romannumeral #1}}


\newcounter{wronumber}\setcounter{wronumber}{1}




\begin{document}
\title[steady incompressible Euler flow] {solutions to the two-dimensional steady
incompressible Euler equations in an annulus}

\author[W.G.Yang] {Wengang Yang}
\address[Wengang Yang]
{\newline School of Mathematics and Statistics, Wuhan University, Wuhan, 430072, P. R. China}
\email{yangwg@whu.edu.cn}

\begin{abstract}
This paper investigates the well-posedness of five classes of boundary value problems for the two-dimensional steady incompressible Euler equations in an annular domain. Three of these boundary conditions can be effectively addressed using the Grad-Shafranov method, and the well-posedness of solutions in the $C^{1,\al}$ space is established via variational techniques. We demonstrate that all five classes of boundary value problems are solvable through the vorticity transport method. Based on this approach, we further prove the well-posedness of $C^{2,\al}$ solutions under a perturbation framework.

\end{abstract}

\keywords{Steady Euler flows, Magneto-hydrostatic equations, Boundary value problem}
\subjclass[2020]{35M32, 35Q31, 76W05}
\date{}
\maketitle

\section{Introduction and main results}
The steady inviscid, incompressible fluid in a two-dimensional domain $\Om$ is governed by the Euler system,
\begin{equation}\label{InE_eq}
\begin{split}
\u\cdot\nabla \u+\nabla p&=0,\\
\div\u&=0,
\end{split}
\end{equation}
where $\u=(u_1,u_2):\Om\to\R^2$ stands for the velocity vector field and $p:\Om\to\R$ is the pressure. The domain $\Om$ concerned in this paper is 
\begin{equation*}
\Om:=\{(x_1,x_2):0<r_0<\sqrt{x_1^2+x_2^2}<r_1\}.
\end{equation*}
Since $\Om$ is an annulus, it is convenient to rewrite the equations \eqref{InE_eq} into polar coordinates. The velocity field $\u$ can be expressed in both Cartesian and polar coordinates as
\begin{equation*}
\u=u_1(x_1,x_2) \e_1+u_2(x_1,x_2) \e_2=u_r(r,\th){\e_r}+u_\th(r,\th) \e_\th,
\end{equation*}
where 
\begin{equation*}
\e_r=(\cos{\th},\sin{\th})^t,\,\e_\th=(\sin{\th},\cos{\th})^t.
\end{equation*} 

In polar coordinates, the domain $\Om$ can be rewritten as
\begin{equation*}
\Om=\{(r,\th):r_0<r<r_1,\,\th\in\mb{T}_{2\pi}\}.     
\end{equation*}

For simplicity, the pressure $p$ is still denoted in polar coordinates as $p=p(r,\th)$. Then the steady incompressible Euler equations \eqref{InE_eq} in polar coordinates read
\begin{equation}\label{InE_Pol}
\begin{split}
(u_r\p_r+\frac{u_\th}{r}\p_{\th})u_r-\frac{u_\th^2}{r}+\p_r p&=0,\\
(u_r\p_r+\frac{u_\th}{r}\p_{\th})u_\th+\frac{u_r u_\th}{r}+\frac{1}{r}\p_\th p&=0,\\
\frac{1}{r}\p_r(r u_r)+\frac{1}{r}\p_{\th}u_\th&=0.
\end{split}
\end{equation}

In the two-dimensional setting, the vorticity reduces to a scalar
\begin{equation*}
\om= \p_{x_1} u_2-\p_{x_2}u_1 =\frac{1}{r} \big( \p_r(r u_\th)-\p_\th u_r\big).
\end{equation*}

For the irrational flow, namely $\om=0$, there exists a potential function $\phi$ such that the velocity field $\u$ satisfies $\u=\nabla\phi$. In this case, the Euler system \eqref{InE_eq} reduces to the Laplace equation $\Delta\phi=0$. Consequently, the well-established theory of harmonic functions can be directly applied to analyze irrational solutions of the system \eqref{InE_eq}.	In contrast, this paper focuses on solutions with non-vanishing vorticity under appropriate boundary conditions. The study of such flows constitutes one of the most challenging and physically important topics in incompressible fluid dynamics \cite{Majda1986,MB2002}. The mathematical analysis of stationary solutions for incompressible Euler flows, including existence, stability, and topological properties, has been thoroughly studied in a large body of literature; see \cite{Bineau1972,EPS2018,EH2025,FB1974,FOG2007,GKM2025} and references therein.

This paper investigates the well-posedness of different boundary conditions for the steady Euler equations \eqref{InE_eq}. The study of such boundary value problems traces back to Grad and Rubin's seminal 1958 work \cite{GR1958}, which proposed physically meaningful boundary conditions for the magnetohydrostatics (MHS) equations. The MHS equations describe the equilibrium state of a plasma or conducting fluid in the presence of a magnetic field, under the fundamental assumption of vanishing fluid velocity. The MHS equilibrium is governed by the following system:
\begin{equation}\label{eq_MHS}
\mf{B}\times j=-\nabla p,\,\nabla\times \mf{B}=j,\,
\nabla\cdot \mf{B}=0.
\end{equation}
Here, $p$ is the plasma pressure, $\mf{B}$ the magnetic field, and $j$ the current density. The MHS model is fundamental to studying astrophysical plasmas and plasma confinement fusion \cite{P2014,Kul2005}. Indeed, the MHS equation is mathematically equivalent to the Euler system \eqref{InE_eq}. By using the vector identity
\begin{equation*}
(\u\cdot\nabla)\u=\frac{1}{2}\nabla|\u|^2-\u\times\om,
\end{equation*}
the Euler equation \eqref{InE_eq} can be reformulated as 
\begin{equation}\label{eq_u_om_H}
\u\times \om=\nabla H,\,
\nabla\times \u=\om,\,
\nabla\cdot \u=0.
\end{equation}
where $H=\frac{1}{2}|\u|^2+p$. Thus, the system \eqref{eq_u_om_H} constitutes a hydrodynamic analogue of the MHS equations \eqref{eq_MHS}, sharing similar mathematical structure while describing different physical regimes. For our analysis, we summarize in Table \ref{tab_BC} the boundary conditions for the Euler equations \eqref{InE_Pol} that will be investigated in this work.




\subsection{Main results and overview}
We establish the existence and uniqueness of solutions with non-zero vorticity for the steady incompressible Euler system \eqref{InE_Pol}, satisfying the boundary conditions presented in Table \ref{tab_BC}. Two principal methods are employed to solve these boundary value problems for the steady Euler: the Grad-Shafranov method \cite{G1967,Saf1966} and the vorticity transport method \cite{Alber1992}. Here we outline the key ideas underlying these arguments and give some remarks. 

The key ingredient of the Grad-Shafranov method is introducing a stream function $\psi(x)$ by using the incompressible condition in \eqref{InE_eq} such that $\u=\nabla_x^\perp\psi$. Then the first equation in  \eqref{eq_u_om_H} yields $H=F(\psi)$ since $\nabla^\perp \psi\cdot\nabla_x H=0$. The second equation  in  \eqref{eq_u_om_H} becomes
\begin{equation}\label{laplace_eq}
\Delta\psi=F'(\psi).
\end{equation}

Once the boundary conditions at the inner circle are specified according to the first line in Table \ref{tab_BC}, the source term $F(\psi)$ in the above equation becomes fully determined by these boundary data. When combined with the corresponding boundary condition at the outer circle, the analysis of the steady Euler equation \eqref{InE_eq} reduces to a boundary value problem for a second-order elliptic equation in terms of $\psi$. Consequently, standard techniques for elliptic equations become applicable in this setting. For our purposes, we will develop a more explicit reformulation of this procedure in polar coordinates in Section \ref{sec2}.
Building upon the fruitful techniques of elliptic equations, Grad-Shafranov reduction is extensively used to study the flexibility, rigidity \cite{CDG2021,HN2023}, and vertex structures \cite{Abe2022,Tur1989} in steady fluid motion. This approach was subsequently extended in \cite{BE2019}, where the authors developed a novel construction of rotational solutions through the introduction of two stream functions and implementation of Nash-Moser iteration.

\begin{table}[ht]
\centering
\begin{tabular}{|c|c|c|}
\hline
{Inner circle $(r=r_0)$} & Outer circle $(r=r_1)$ & Notation\\ \hline
\multirow{3}{*}{$\u\cdot \n=f_0(\th),\,\frac{1}{2}|\u|^2+p=b_0(\th)$} &
$\u\cdot \n=f_1(\th)$&\eqref{BC1} \\ \cline{2-3} 
& $\frac{1}{2}|\u|^2+p=b_1(\th)$&\eqref{BC2}\\ \cline{2-3}
& $p=p_1(\th)$&\eqref{BC3} \\ \hline
\multirow{2}{*}{$\u\cdot \n=f_0(\th),\,p=p_0(\th)$} & 
$\u\cdot \n=f_1(\th)$&\eqref{BC4} \\ \cline{2-3} 
& $p=p_1(\th)$&\eqref{BC5}\\ \hline
\end{tabular}
\vspace{8pt}
\caption{Boundary Conditions}
\label{tab_BC}
\end{table}

\vspace{-20pt}	

Roughly speaking, the core idea of the vorticity transport method lies in verifying the iterative scheme $\hat \u\to\om\to\u$. This theoretical framework was rigorously developed by Albert \cite{Alber1992}, who proved the existence of non-vanishing vorticity solutions in Sobolev space perturbed from irrational base flows for the 3D steady Euler equation. This vorticity transport framework offers significant flexibility in treating various boundary conditions. Extending this approach, the authors in \cite{TX2009} constructed solutions to the 3D steady Euler equation with a class of additional boundary conditions for the vorticity. Our analysis reveals that the vorticity transport method can systematically resolve each boundary condition listed in Table \ref{tab_BC}.

Taking the curl of the first equation in \eqref{InE_eq}, we obtain the governing equation for the vorticity $\om$:
\begin{equation}\label{eq_trans_om}
(\u\cdot\nabla)\om=0.
\end{equation}

For a given velocity field $\hat\u$ in an appropriate function space, the vorticity $\om$ can be uniquely determined through the boundary value problem:
\begin{equation*}
(\hat\u\cdot\nabla)\om=0,\,
\om(r_0,\th)=\om_0(\th).
\end{equation*}
The corresponding velocity field $\u$ is then obtained as the unique solution to the following div-curl system with normal boundary conditions:
\begin{equation*}
\nabla\times \u=\om,\,
\nabla\cdot \u=0,\,
(\u\cdot \n)(r_i,\th)=f_i(\th),\,i=0,1.
\end{equation*}	

The key point is whether the boundary conditions in Table \ref{tab_BC} exactly determine $\om_0(\th)$ and $f_1(\th)$. It turns out that the boundary conditions specified in the first row of Table \ref{tab_BC} for the inner circle directly determine the initial vorticity $\om_0$ (see \eqref{bc1*_om_0}), whereas those in the second row establish $\om_0$ through an iterative process (see \eqref{bc4_om0}). All boundary conditions enumerated in Table \ref{tab_BC} permit either direct or indirect determination of $f_1(\th)$ (cf. \eqref{bc5_def_f1} and \eqref{bc2*_def_f1}), thereby enabling resolution of all cases presented in the table. See Section \ref{sec3} for detailed analysis. 

\begin{remark}
Both the Grad-Shafranov method and the vorticity transport method present  difficulties when investigating the well-posedness of the Euler equation \eqref{InE_Pol} subject to the following boundary conditions:
\begin{equation*}
(\u\cdot \n)(r_0,\th)=f_0(\th),\,p(r_0,\th)=p_0(\th),\,
(\frac{1}{2}|\u|^2+p)(r_1,\th)=b_1(\th).
\end{equation*}
The Grad-Shafranov method becomes inapplicable due to the lack of Bernoulli function boundary data at the inner circle. For the vorticity transport method, the fundamental obstacle stems from constructing a globally valid $f_1(\th)$. Although the vorticity $\om(r,\th)$ can be solved analogously to the case of \eqref{BC5} (see \eqref{bc5_eq_om} and \eqref{bc5_om0}), the formula
\begin{equation*}
f_1(\th)=\frac{\p_\th b_1(\th)}{\om(r_1,\th)}
\end{equation*}
may not yield a well-defined function for all $\th\in\mb{T}_{2\pi}$. 
\end{remark}

\begin{remark}
It is worth mentioning that the authors  of \cite{AV2024} consider the following boundary conditions
\begin{equation*}
\u(r,\th)=\u_0(\th),\,(\u\cdot\n)(r_1,\th)=f_1(\th),
\end{equation*} 
which are not covered by Table \ref{tab_BC}. Addressing such a boundary value problem seems quite difficult, particularly in the designation of the iteration scheme and verification of the tangential boundary condition.
\end{remark}

\begin{remark}
All theorems in this paper concerning the well-posedness of the steady Euler equation \eqref{InE_Pol} remain valid when the boundary data on the inner and outer circles in Table \ref{tab_BC} are interchanged.
\end{remark}

\begin{remark}
The equivalence between the steady Euler system \eqref{InE_eq} and the MHS equation \eqref{eq_MHS} implies that all boundary conditions in Table \ref{tab_BC} admit equivalent formulations in terms of $(\mf{B},p)$, and inherit corresponding well-posedness results, though detailed presentations are omitted for brevity.
\end{remark}

\subsection{Notation} We employ the following notation throughout this paper.
\begin{itemize}
\item  $\mb{T}_{2\pi}$ denotes the one-dimensional torus with period $2\pi$.
\item Let $C_b(\Om)$ be the set of bounded continuous functions on $\Om$. The H{\"o}lder space $C^{k,\alpha}(\Omega)$, where $k=0,1,2\cdots$ and $\alpha \in (0,1)$, consists of functions $f \in C_b(\Omega)$ with finite norm:
\begin{equation*}
\|f\|_{C^{k,\alpha}} := \|f\|_{C^k} + \sup_{\substack{x,y \in \Omega \\ x \neq y}} \frac{|\nabla^k f(x) - \nabla^k f(y)|}{|x-y|^\alpha}.
\end{equation*}

\item A function $f$ is called Lipschitz continuous if
\begin{equation*}
|f(x)-f(y)|\leq L|x-y|
\end{equation*}
for some positive constant $L$ and all $x,y\in \Om$. We write
\begin{equation*}
\|f\|_{Lip}:=\sup_{\substack{{x,y\in\Om}\\{x\neq y}}}\frac{|f(x)-f(y)|}{|x-y|}.
\end{equation*} 
We denote the space $C^{0,1}$ as the continuous functions with finite norm 
\begin{equation*}
\|f\|_{C^{0,1}}:=\|f\|_{C^0}+\|f\|_{Lip}.
\end{equation*}

\item We identify the space $C^{k,\al}(\mb{T}_{2\pi})$ with $C^{k,\al}(\R)$ such that $f(s+2\pi)=f(s)$. 

\item A function $g(r,\th)\in H^1_{per}(\Om)$ means that $g(r,\th)$ belongs to the Sobolev space $H^1(\Om)$ and $g(r,\eta+\th)=g(r,\th)$.

\item For any $f\in C^{k,\al}(\mb{T}_{2\pi})$, we denote by $f^{ave}$ its mean value over one period, namely,
\begin{equation*}
f^{ave}=\frac{1}{2\pi}\int_0^{2\pi}f(s)\,ds.
\end{equation*}

\item  Consider a vector field $\v(r,\th)=v_r(r,\th)\e_r+v_\th(r,\th)\e_\th$ in cylindrical coordinates. We define the following operator:
\begin{equation*}
\begin{split}
\div_{c}\v&:=\frac{1}{r}\p_r(r v_r)+\frac{1}{r}\p_\th v_\th,\\
\curl_c&\v:=\frac{1}{r}\p_r(r v_\th)-\frac{1}{r}\p_\th v_r,\\
\nabla_c&:=\p_r\e_r+\frac{1}{r}\p_\th\e_\th.
\end{split}
\end{equation*}

\end{itemize}

\subsection{Paper Organization}
This paper is structured as follows. In Section \ref{sec2}, we examine three classes of boundary conditions, \eqref{BC1}, \eqref{BC2}, and \eqref{BC3}, for the Euler equation \eqref{InE_Pol} using the Grad-Shafranov approach. We demonstrate that the steady Euler equation \eqref{InE_Pol}, when endowed with any of these boundary conditions, can be reformulated as a boundary value problem for a nonlinear elliptic equation through the introduction of a stream function. Two additional boundary conditions, \eqref{BC4} and \eqref{BC5}, are investigated in Sections \ref{sec3.1} and \ref{sec3.2}, respectively, employing the vorticity transport method. In the last subsection \ref{sec3.3}, we provide an alternative proof for the boundary value problem originally formulated in section \ref{sec2} by applying the method developed in section \ref{sec3}.

\section{Grad-Shafranov approach}\label{sec2}
In this section, we investigate the well-posedness of the Euler equations \eqref{InE_Pol} under three different types of boundary conditions \eqref{BC1},\eqref{BC2}, and \eqref{BC3} by the Grad-Shafranov approach.

The first type of boundary condition with which we are concerned is as follows.
\begin{equation}\tag{BC1}\label{BC1}
\begin{cases}
r_0u_r(r_0,\th)=f_0(\th),\\
r_1u_r(r_1,\th)=f_1(\th),\\
\frac{1}{2}|\u(r_0,\th)|^2+p(r_0,\th)=b_0(\th),\\
\int_{r_0}^{r_1} u_\th(r,0)\,dr=j_0,
\end{cases}
\end{equation}
where $f_i(\th),\,i=0,1$ and $b_0(\th)$ are given $2\pi$-periodic functions, and 
\begin{equation}\label{flux_rela}
\int_{0}^{2\pi}f_1(s)\,ds=\int_{0}^{2\pi}f_0(s)\,ds:=J_0.
\end{equation}
\begin{theorem}\label{thm1}
Let $f_0(\th),f_1(\th),b_0(\th)\in C^{1,\al}(\mb{T}_{2\pi})$ and $f_0(\th)\geq\de_0>0$ for all $\th\in\mb{T}_{2\pi}$. Then the steady incompressible Euler equation \eqref{InE_Pol} admits a solution $(\u,p)\in (C^{1,\al}(\Om))^3$ satisfying the boundary condition \eqref{BC1}. Moreover,  the solution $(\u,p)$ is unique when $b_0(\th)\in C^{1,1}(\mb{T}_{2\pi})$, and $C^{0,1}$-norm of $b_0'$ is sufficiently small. 
\end{theorem}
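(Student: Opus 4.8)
The plan is to realize \eqref{BC1} as a Dirichlet problem for a semilinear elliptic equation via the Grad-Shafranov reduction, solve it variationally, and transfer the regularity back to $(\u,p)$. First I would introduce the stream function $\psi$ through $u_r=\frac{1}{r}\p_\th\psi$, $u_\th=-\p_r\psi$, which automatically satisfies the incompressibility equation in \eqref{InE_Pol}. As recalled in the introduction, the Bernoulli function $H=\frac12|\u|^2+p$ is constant on the level sets of $\psi$, so $H=F(\psi)$ for a scalar profile $F$, and the momentum equations collapse to $\Delta\psi=F'(\psi)$ as in \eqref{laplace_eq}. The boundary data translate cleanly: $r_iu_r(r_i,\th)=f_i(\th)$ becomes $\p_\th\psi(r_i,\th)=f_i(\th)$, so $\psi(r_i,\th)=\int_0^\th f_i(s)\,ds+c_i$ is prescribed up to a constant; the flux relation \eqref{flux_rela} guarantees that both traces carry the identical multivalued part $\frac{J_0}{2\pi}\th$, so $\psi$ is well defined on $\Om$ modulo this harmonic term; and $\int_{r_0}^{r_1}u_\th(r,0)\,dr=j_0$ fixes the relative constant $c_0-c_1=j_0$, the overall constant being a harmless gauge freedom.

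The decisive use of the hypothesis $f_0\geq\de_0>0$ is in \emph{defining} $F$. Since $\p_\th\psi(r_0,\th)=f_0(\th)>0$, the trace $s=\psi(r_0,\th)=\int_0^\th f_0$ is strictly increasing, hence invertible with a $C^{1,\al}$ inverse $\th=\th(s)$ (because $f_0\geq\de_0$ keeps $\th'=1/(f_0\circ\th)$ bounded and H\"older), and the inner Bernoulli datum forces $F(s):=b_0(\th(s))$ on the range $[0,J_0]$; thus $F\in C^{1,\al}$ since $b_0,f_0\in C^{1,\al}$ and $f_0\geq\de_0$. I would then extend $F$ to a bounded function on all of $\R$ with $\|F\|_{C^{1,\al}(\R)}$ controlled, which is exactly what makes the nonlinearity globally defined, since the interior and outer values of $\psi$ need not lie in $[0,J_0]$.

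For existence I would solve $\Delta\psi=F'(\psi)$ with the above Dirichlet data by minimization. Writing $\psi=\psi_*+w$ with $\psi_*$ a fixed smooth function realizing the boundary traces (with single-valued gradient) and $w$ in the periodic $H^1_0$-class, I minimize
\[
I(w)=\int_\Om\Big(\tfrac12|\nabla(\psi_*+w)|^2+F(\psi_*+w)\Big)\,dx .
\]
Since $F$ is bounded, $I$ is coercive and bounded below on $H^1_0$, and weak lower semicontinuity of the Dirichlet energy together with the compact embedding $H^1\hookrightarrow L^2$ (handling the bounded zeroth-order term) yields a minimizer, i.e.\ a weak solution. As $F'\in C^{0,\al}$, Schauder regularity upgrades $\psi$ to $C^{2,\al}(\Om)$, whence $\u=\nabla^\perp\psi\in(C^{1,\al})^2$ and $p=F(\psi)-\frac12|\u|^2\in C^{1,\al}$. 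Reversing the reduction is automatic: with $\om=\Delta\psi=F'(\psi)$ one checks $\u\times\om=F'(\psi)\nabla\psi=\nabla\big(F(\psi)\big)=\nabla H$, so \eqref{eq_u_om_H}, equivalently \eqref{InE_Pol}, holds; and \eqref{BC1} is satisfied by construction, the inner Bernoulli identity being $F(\psi(r_0,\th))=F\big(\int_0^\th f_0\big)=b_0(\th)$.

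For uniqueness under $b_0\in C^{1,1}$ with $\|b_0'\|_{C^{0,1}}$ small, the key is that $F'=(b_0'/f_0)\circ\th$ makes $\|F''\|_{L^\infty}$ controlled by $\|b_0'\|_{C^{0,1}}$ (and $\de_0^{-1},\|f_0\|_{C^1}$), so $F'$ is a small-Lipschitz nonlinearity. Given two solutions with identical data and the same profile $F$, the difference $w=\psi_1-\psi_2\in H^1_0$ solves $\Delta w=F'(\psi_1)-F'(\psi_2)$; testing with $w$ yields $\|\nabla w\|_{L^2}^2\leq\|F''\|_{L^\infty}\|w\|_{L^2}^2$, and Poincar\'e's inequality forces $w\equiv0$ once $\|b_0'\|_{C^{0,1}}$ is small. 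I expect the main obstacle to be the bookkeeping of the reduction rather than the elliptic analysis: encoding the net flux $J_0$ via a multivalued $\psi$, choosing a global extension of $F$ that is simultaneously bounded (for coercivity) and $C^{1,\al}$ (for Schauder) with $C^{1,1}$ control in the uniqueness regime, and—most delicate—arguing that for small $\|b_0'\|_{C^{0,1}}$ the profile $F$ is genuinely determined, so that distinct Euler solutions cannot arise from different admissible extensions (equivalently, that recirculation forcing $\psi$ outside $[0,J_0]$ is excluded).
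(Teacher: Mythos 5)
Your proposal follows essentially the same route as the paper: the Grad--Shafranov reduction using $f_0\geq\de_0>0$ to define the profile $F=b_0\circ\phi_0^{-1}$ from the inner Bernoulli datum, variational existence for the resulting semilinear Dirichlet problem with a bounded nonlinearity followed by Schauder regularity, and uniqueness from the smallness of $\|b_0'\|_{C^{0,1}}$ (the paper closes uniqueness with a Schauder estimate on the difference rather than your energy/Poincar\'e argument, but both hinge on the same Lipschitz bound \eqref{diff_berno} for $F'$). The one obstacle you single out as delicate --- extending $F$ beyond $[0,J_0]$ --- is actually moot: since $\phi_0:\R\to\R$ is a strictly increasing bijection with $\phi_0(\th+2\pi)=\phi_0(\th)+J_0$, the formula $F=b_0\circ\phi_0^{-1}$ is already globally defined and $J_0$-periodic, which is precisely the canonical choice \eqref{def_B_BC12} the paper works with.
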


\begin{remark}
The existence of solutions is established without any smallness assumption on the boundary data. For uniqueness, only the condition that $\|b_0'\|_{C^0}+\|b_0'\|_{Lip}$ be sufficiently small is required, which permits $b_0$ itself to be large.
\end{remark}

The second class of boundary conditions is derived by substituting the radial velocity condition at $r=r_1$ in \eqref{BC1} with the value of the Bernoulli function. Specifically, we impose:
\begin{equation}\tag{BC2}\label{BC2}
\begin{cases}
r_0u_r(r_0,\th)=f_0(\th),\\
\frac{1}{2}|\u(r_0,\th)|^2+p(r_0,\th)=b_0(\th),\\  
\frac{1}{2}|\u(r_0,\th)|^2+p(r_0,\th)=b_1(\th),
\end{cases}
\end{equation}
where $f_0(\th),b_0(\th),b_1(\th)$ are given $2\pi$-periodic functions. Since the Bernoulli function satisfies a transport equation, its boundary values cannot be independently prescribed on both the inner and outer circles. It is mathematically consistent and physically admissible to assume that the Bernoulli function's boundary values on the inner circle are diffeomorphically equivalent to those on the outer circle. Hence, we suppose that
\begin{equation}
b_1(\th)=(b_0\circ T)(\th),
\end{equation}
where $T:\mb{T}_{2\pi}\to \mb{T}_{2\pi}$ is a given orientation-preserving  $C^{2,\al}$ diffeomorphism.

\begin{theorem}\label{thm2}
Let $f_0(\th),b_0(\th)\in C^{1,\al}(\mb{T}_{2\pi})$ and $f_0(\th)\geq\de_0>0$ for all $\th\in\mb{T}_{2\pi}$. Then the steady incompressible Euler equation \eqref{InE_Pol} admits a solution $(\u,p)\in (C^{1,\al}(\Om))^3$ satisfying the boundary condition \eqref{BC2}. Moreover, the solution $(\u,p)$ is unique when $b_0(\th)\in C^{1,1}(\mb{T}_{2\pi})$, and $C^{0,1}$-norm of $b_0'$ is sufficiently small. 
\end{theorem}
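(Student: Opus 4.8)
The plan is to carry out the Grad-Shafranov reduction sketched in the introduction, turning \eqref{BC2} into exactly the kind of elliptic boundary value problem already handled for \eqref{BC1}, the only difference being that the outer Dirichlet datum is now produced by the diffeomorphism $T$. I introduce the (multivalued) stream function $\psi$ with $\u=\nabla^\perp\psi$; then $\div\u=0$ is automatic and, writing $H=\tfrac12|\u|^2+p$, the first identity in \eqref{eq_u_om_H} gives $H=F(\psi)$ while \eqref{laplace_eq} becomes $\Delta\psi=F'(\psi)$. Fixing the orientation of $\nabla^\perp$ so that $\p_\th\psi(r_0,\th)=f_0(\th)$, the inner trace $\psi(r_0,\cdot)=:\Psi_0$ is a primitive of $f_0$; the hypothesis $f_0\ge\de_0>0$ makes $\Psi_0$ a strictly increasing $C^{2,\al}$ diffeomorphism with monodromy $J_0$, and the inner Bernoulli condition $H(r_0,\th)=b_0(\th)$ then determines the profile $F:=b_0\circ\Psi_0^{-1}\in C^{1,\al}$. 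The structural point that makes everything single-valued is that $\Psi_0(\th+2\pi)=\Psi_0(\th)+J_0$ together with the $2\pi$-periodicity of $b_0$ forces $F$ to be $J_0$-periodic, so that $F(\psi)$ and $F'(\psi)$ are genuine single-valued functions on $\Om$ although $\psi$ itself is not.

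The new ingredient is the outer condition. Since $H(r_1,\th)=F(\psi(r_1,\th))$ must equal $b_1(\th)=b_0(T(\th))=F(\Psi_0(T(\th)))$, I impose the outer Dirichlet trace $\psi(r_1,\th)=\Psi_0(T(\th))$, which realizes the outer Bernoulli condition automatically; as $T$ is an orientation-preserving $C^{2,\al}$ diffeomorphism this trace is $C^{2,\al}$ and carries the same monodromy $J_0$ as $\Psi_0$. In contrast to \eqref{BC1}, no separate circulation normalization (the analogue of $j_0$) is required here, because the additive constant of the outer trace is inherited through $\Psi_0$. Writing $\psi=\tfrac{J_0}{2\pi}\th+\phi$ with $\phi$ single-valued and $2\pi$-periodic, and using $\Delta\th=0$, the problem reduces to the semilinear Dirichlet problem
\[
\Delta\phi=F'\!\big(\tfrac{J_0}{2\pi}\th+\phi\big)\ \text{ in }\Om,\qquad \phi|_{r=r_0}=\Psi_0-\tfrac{J_0}{2\pi}\th,\quad \phi|_{r=r_1}=\Psi_0\circ T-\tfrac{J_0}{2\pi}\th,
\]
with $C^{2,\al}$ periodic boundary data.

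For existence I minimize $\J(\phi)=\int_\Om\big(\tfrac12|\nabla\phi|^2+F(\tfrac{J_0}{2\pi}\th+\phi)\big)\,dx$ over $\phi_*+H^1_0(\Om)$; because $F$ is bounded (continuous and $J_0$-periodic) the functional is coercive and weakly lower semicontinuous, so a minimizer exists with no smallness assumption and solves the Euler-Lagrange equation weakly. The bounded right-hand side gives $\phi\in W^{2,p}$ for all $p$, hence $\phi\in C^{1,\beta}$, and then $F'\in C^{0,\al}$ composed with the Lipschitz argument yields a $C^{0,\al}$ right-hand side, so Schauder estimates upgrade $\phi$ to $C^{2,\al}(\overline\Om)$; consequently $\u=\nabla^\perp\psi\in C^{1,\al}$ and $p=F(\psi)-\tfrac12|\u|^2\in C^{1,\al}$, and one checks directly that $(\u,p)$ solves \eqref{InE_Pol} and satisfies all of \eqref{BC2}. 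For uniqueness I subtract two solutions and test the difference $w$ (which vanishes on $\p\Om$) against itself, obtaining $\int_\Om|\nabla w|^2=-\int_\Om F''(\xi)\,w^2$ via the mean value theorem; the hypothesis $b_0\in C^{1,1}$ gives $F''\in L^\infty$ with $\|F''\|_{L^\infty}\le C(f_0)\big(\|b_0'\|_{C^0}+\|b_0'\|_{Lip}\big)$, so once $\|b_0'\|_{C^{0,1}}$ is small enough that $\|F''\|_{L^\infty}$ is smaller than the reciprocal of the Poincar\'e constant of $\Om$, the estimate forces $w\equiv0$.

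The main obstacle is not the elliptic solvability but the reduction itself. For existence the delicate point is the well-definedness and single-valuedness of $F$, which rests entirely on the $J_0$-periodicity argument and on $f_0\ge\de_0>0$. For uniqueness the subtler issue is that one must argue that any solution of \eqref{InE_Pol}--\eqref{BC2}, and not merely the constructed one, reduces to the same semilinear problem with the same $F$ and the same outer trace $\psi(r_1,\cdot)=\Psi_0\circ T$; this requires the streamlines to run monotonically from the inner to the outer circle, so that $H=F(\psi)$ holds globally with $F$ pinned down by the inner data, together with injectivity of $F$ on the range of $\psi(r_1,\cdot)$. Under the smallness hypothesis the solution is a small perturbation of a flow with $u_r>0$ throughout, which is precisely what secures these structural facts.
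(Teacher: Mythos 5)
Your proposal is correct and follows essentially the same route as the paper: the Grad--Shafranov reduction with $F=b_0\circ\Psi_0^{-1}$ (made single-valued by the $J_0$-periodicity coming from $f_0\ge\de_0>0$), the outer Dirichlet trace $\Psi_0\circ T$ so that the outer Bernoulli condition follows from $F\circ(\Psi_0\circ T)=b_0\circ T=b_1$, variational existence with no smallness, and Schauder bootstrapping to $C^{2,\al}$. The only deviations are cosmetic --- you work with the Cartesian Laplacian on the annulus rather than the divergence-form operator in $(r,\th)$ coordinates, and your uniqueness step uses an $L^2$ energy estimate with the Poincar\'e constant where the paper absorbs via a Schauder estimate --- and your closing remark on why an arbitrary solution of \eqref{InE_Pol}--\eqref{BC2} must reduce to the same semilinear problem is in fact more careful than the paper's treatment.
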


Next, we study the modified boundary value problem where the pressure $p$ is specified through a Dirichlet condition at $r=r_1$, replacing the value of the Bernoulli function $\frac{1}{2}\u^2+p$ prescribed at this boundary. This modification leads to a nonlinear boundary condition at $r=r_1$, which is fundamentally different from the linear framework induced by \eqref{BC2}. To this end, it is natural to investigate the well-posedness of system \eqref{InE_Pol} within the perturbation framework around the equilibrium state $(\bar \u,\bar p)=(\frac{1}{r}\e_r,-\frac{1}{2 r^2})$. We adopt the following boundary conditions
\begin{equation}\tag{BC3}\label{BC3}
\begin{cases}
r_0u_r(r_0,\th)=1+f_0(\th),\\
\frac{1}{2}|\u(r_0,\th)|^2+p(r_0,\th)=b_0(\th),\\
\int_{r_0}^{r_1} u_\th(r,0)\,dr=j_0,\\
\p_\th p(r_1,\th)=\p_{\th}p_1(\th).
\end{cases}
\end{equation}
\begin{theorem}\label{thm3}
Given $f_0(\th),b_0(\th),p_1(\th)\in C^{1,\al}(\mb{T}_{2\pi})$ and the constant $j_0\in\R$. There exists a small constant $\vep_0>0$ such that for all $\vep\in(0,\vep_0]$, if
\begin{equation*}
\|f_0\|_{C^{1,\al}}+\|b_0\|_{C^{1,\al}}+\|p_1'\|_{C^{\al}}+|j_0|\leq \vep,
\end{equation*}
then the steady incompressible Euler equation \eqref{InE_Pol} with the boundary condition \eqref{BC3} has a unique solution  $(\u,p)\in (C^{1,\al}(\Om))^3$.
\end{theorem}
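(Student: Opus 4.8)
The plan is to run the Grad--Shafranov reduction of the introduction and then treat the resulting problem as a small perturbation of the source flow $\bar\psi=\th$. Using incompressibility I introduce the stream function with $u_r=\frac1r\p_\th\psi$, $u_\th=-\p_r\psi$, so that $\om=-\Delta\psi$, the momentum equation gives $H:=\frac12|\u|^2+p=F(\psi)$, and the interior equation is $\Delta\psi=F'(\psi)$. The first line of \eqref{BC3} becomes $\p_\th\psi(r_0,\th)=1+f_0(\th)$, which pins down $\psi(r_0,\cdot)\in C^{2,\al}$ up to a constant and is strictly increasing once $\|f_0\|$ is small; composing its inverse with the Bernoulli datum through $F(\psi(r_0,\th))=b_0(\th)$ then determines $F$ as a $\Phi$-periodic function on $\R$, where $\Phi=2\pi+\int_0^{2\pi}f_0$ is the flux. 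The key simplification is that the base Bernoulli function vanishes, $\bar H=\frac1{2r^2}-\frac1{2r^2}=0$, so $F=O(\vep)$ in $C^{1,\al}$. The third line of \eqref{BC3} reads $\psi(r_0,0)-\psi(r_1,0)=j_0$ and fixes the remaining constant, while the outer condition becomes the nonlinear first-order relation
\begin{equation*}
\tfrac12(\p_r\psi)^2+\tfrac1{2r_1^2}(\p_\th\psi)^2=F(\psi)-p_1(\th)-c_1\qquad(r=r_1),
\end{equation*}
with $c_1$ an unknown constant (equivalently $\p_\th[\tfrac12|\nabla\psi|^2-F(\psi)]=-p_1'$ on $r=r_1$).

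The heart of the argument is the linearization of this outer condition at $\bar\psi=\th$. Since $\p_r\bar\psi=0$ (the base swirl $\bar u_\th$ vanishes), the normal derivative $\p_r\psi$ enters only through the quadratic term $\tfrac12(\p_r\psi)^2$, so at first order the outer condition collapses to the tangential relation $\frac1{r_1^2}\p_\th\delta(r_1,\cdot)=(\text{data})$; that is, the linearized outer boundary condition is of \emph{Dirichlet} type, prescribing $\delta(r_1,\cdot)\in C^{2,\al}$ up to a constant. Consequently the linearization of the whole reduced problem at the base flow is exactly the Poisson--Dirichlet problem on the annulus, $\Delta\delta=g$ in $\Om$ with $\delta(r_0,\cdot)$ and $\delta(r_1,\cdot)$ prescribed and the free constant fixed by the circulation functional, which is uniquely solvable with Schauder estimates. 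This linearized invertibility is the structural input that drives the scheme.

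With this in place I would set up a fixed-point scheme on a small ball of $C^{2,\al}(\Om)$ around $\bar\psi$. Given an iterate $\psi^{(n)}$, I evaluate the nonlinear outer relation---feeding in the traces $\p_r\psi^{(n)}(r_1,\cdot)$ and $\p_\th\psi^{(n)}(r_1,\cdot)$---to produce outer Dirichlet data $\psi^{(n+1)}(r_1,\cdot)$ (the additive constant fixed by periodicity, $c_1$ absorbed accordingly), and then solve the linear Dirichlet problem $\Delta\psi^{(n+1)}=F'(\psi^{(n)})$ with inner data from $f_0$ and this outer data, the relative gauge fixed by $\psi(r_0,0)-\psi(r_1,0)=j_0$. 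The quadratic boundary terms are $O(\vep)$ and the normal trace of the gradient is controlled by Schauder estimates without loss of derivatives, while the interior source $F'(\psi^{(n)})$ is $O(\vep)$ in $C^\al$; hence for $\vep\le\vep_0$ the scheme maps the small ball to itself. Undoing the reduction yields $\u=\nabla^\perp\psi\in C^{1,\al}$ and $p=F(\psi)-\tfrac12|\u|^2\in C^{1,\al}$.

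I expect two difficulties. The principal, structural obstacle is the nonlinear pressure condition at $r=r_1$: unlike the normal-velocity or Bernoulli data of \eqref{BC1}--\eqref{BC2} it supplies neither Dirichlet nor Neumann data directly, and only the linearization above---hinging on $\bar u_\th=0$---exposes its Dirichlet character together with a quadratic, normal-derivative remainder that smallness then controls. The secondary, technical hurdle is that $F$ inherits only $C^{1,\al}$ regularity from $b_0$, so the Nemytskii term $F'(\psi)$ is not Lipschitz in $\psi$ in any Hölder norm and a naive contraction fails; I would therefore obtain existence by a compactness (Leray--Schauder) fixed point, for which continuity, compactness and the uniform $O(\vep)$ a priori bound suffice, and secure uniqueness in the small-solution class by the difference argument for $\Delta w=F'(\psi_1)-F'(\psi_2)$ combined with the quadratic boundary differences and the $O(\vep)$ smallness---or, if a clean closure is elusive, by the equivalent div--curl (vorticity-transport) reformulation developed later in the paper. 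Finally one must verify the geometric fact that for small data every streamline meets the inner circle, which is what guarantees that $F$ is globally determined by the inflow data.
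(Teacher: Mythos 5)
Your proposal follows essentially the same route as the paper's proof: the Grad--Shafranov reduction with $B$ determined from the inner data via $\phi_0^{-1}$, perturbation around $\bar\psi=\th$ (so that $\bar u_\th=0$ and the base Bernoulli function vanishes), the observation that the nonlinear outer pressure-derivative condition linearizes to a Dirichlet condition for the stream function with quadratic normal-derivative remainders, and an iteration that at each step regenerates the outer trace $f_1$ (with its mean and additive constant fixed by periodicity and flux compatibility) before solving the linear elliptic problem, exactly as in the paper's Proposition \ref{prop2} and its reformulation \eqref{eq_vphi_bd2}--\eqref{BC3_def_f1}. Your caveat that $F'(\psi)$ is not Lipschitz in H\"older norms when $b_0\in C^{1,\al}$ only, and that one may need Leray--Schauder plus a separate uniqueness argument, is a fair point: the paper's contraction estimate \eqref{est_BC3_diff_f1} invokes \eqref{diff_berno}, which as written requires $b_0'$ Lipschitz, so your version is if anything the more careful one.
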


By introducing an appropriate stream function, the Grad-Shafranov approach reduces the well-posedness analysis of the Euler system \eqref{InE_Pol}, when equipped with any of the boundary conditions \eqref{BC1}, \eqref{BC2}, or \eqref{BC3}, to the study of a simpler elliptic equation. Utilizing the incompressibility condition in \eqref{InE_Pol}, we know that there is a stream function $\phi(z_1,z_2)$ with $(r,\th)=(z_1,z_2)$, such that
\begin{equation}\label{def_strm_fun}
\p_{z_1}\phi=- u_\th(z),\,\p_{z_2}\phi=z_1 u_r(z).
\end{equation}

Denoting
\begin{equation*}
B=\frac{1}{2}|\u|^2+p,\,\om=\frac{1}{r}\big( \p_r(r u_\th)-\p_\th u_r \big),
\end{equation*}
then it follows from \eqref{InE_Pol} that 
\begin{equation}\label{eq_phi_B_om}
\begin{cases}
\nabla_z^\perp\phi\cdot \nabla_z B=0,\\
\om\nabla_z\phi+\nabla_z B=0,
\end{cases}
\end{equation}
where $\nabla^\perp_z=(-\p_{z_2},\p_{z_1})$. The first equation in \eqref{eq_phi_B_om} suggests $B=B(\phi)$ and the second equation becomes $\om+B'(\phi)=0$. Since $\om$ can be rewritten as
\begin{equation*}
\begin{split}
\om =-\frac{1}{z_1}\big(\p_{z_1}(z_1\p_{z_1}\phi)+\p_{z_2} (\frac{1}{z_1}\p_{z_2}\phi) \big),
\end{split}
\end{equation*}
we finally get 
\begin{equation}\label{eq_phi}
-\div_z(K(z_1)\nabla_z\phi)=z_1B'(\phi),
\end{equation}
where 
\begin{equation*}
K_{11}(z_1)=z_1,\,K_{22}=\frac{1}{z_1},\,K_{12}=K_{21}=0.
\end{equation*}

We will demonstrate that \eqref{eq_phi} constitutes a closed system for the unknown function $\phi$ when equipped with one of the boundary conditions \eqref{BC1},\eqref{BC2}, and \eqref{BC3}. This reduction is available because the Bernoulli function $B(\cdot)$ is determined by boundary data. Thus, the analysis of steady Euler equations is effectively reduced to the study of the boundary value problem for the nonlinear elliptic equation \eqref{eq_phi}. In the rest of this section, we derive the boundary data for the stream function $\phi$ in each case and establish the well-posedness of $\phi$ via two different methods.

\subsection{variational method} In this subsection, we study the boundary value problem for the stream function $\phi$ when considering boundary conditions \eqref{BC1} or \eqref{BC2}.

To this end, denoting 
\begin{equation}\label{phi0}
\phi_0(z_2)=\int_{0}^{z_2}f_0(s)\,ds,
\end{equation}
then the first relation in \eqref{BC1} implies
\begin{equation}
\frac{d \phi_0(z_2)}{d z_2}=r_0 v_r(r_0,z_2)=f_0(z_2),
\end{equation}
Thanks to $f_0(z_2)>0$, we know that $\phi_0(z_2)$ is invertible. Specifically, there is a function $Z(\tau)$ such that
\begin{equation*}
Z(\tau)=\phi_0^{-1}(\tau),\,\tau\in\R.
\end{equation*}
Moreover, we have
\begin{equation*}
\phi_0(z_2+2\pi)-\phi_0(z_2)=\int_{z_2}^{z_2+2\pi}f_0(s)ds\equiv J_0,
\end{equation*}
and
\begin{equation*}
Z(\tau+J_0)-Z(\tau)=2\pi,
\end{equation*}
where $J_0$ is defined in \eqref{flux_rela}.

The second line in \eqref{BC1} shows that
\begin{equation*}
B(r_0,z_2)=b_0(z_2).
\end{equation*}

Therefore, we can define the single-variable functions $B(\cdot)$ as
\begin{equation}\label{def_B_BC12}
B(\tau)=b_0(Z(\tau)),
\end{equation}
and it's easy to check that
\begin{equation*}
B(\tau+J_0)=B(\tau).
\end{equation*}

In the case of prescribing the boundary condition \eqref{BC1}, we define $\phi_1(z_2)$ as
\begin{equation}\label{phi1_1}
\phi_1(z_2)=-j_0+\int_{0}^{z_2}f_1(s)\,ds.
\end{equation}

When the boundary condition \eqref{BC2} is imposed, the function $\phi_1(z_2)$ is constructed through the diffeomorphism $T$ via the composition
\begin{equation}\label{phi1_2}
\phi_1(z_2)=\phi_0\circ T(z_2).
\end{equation}

Moreover, a direct computation shows that $\phi_1(z_2)$ constructed in both \eqref{phi1_1} and \eqref{phi1_2} satisfies
\begin{equation*}
\begin{split}
\phi_1(z_2+2\pi)-\phi_1(z_2)=J_0.
\end{split} 
\end{equation*}

To sum up, we conclude that the stream function $\phi(z)$ is governed by the boundary value problem below
\begin{equation}\label{eq_phi_BC12}
\begin{cases}
-\div_z(K(z_1)\nabla_z\phi)=z_1B'(\phi),\,z\in \Om,\\
\phi(r_0,z_2)=\phi_0(z_2), \\
\phi(r_1,z_2)=\phi_1(z_2),\\
\phi(z_1,z_2+2\pi)=\phi(z_1,z_2)+J_0,\,z_1\in (r_0,r_1).
\end{cases}
\end{equation}

\begin{proposition}\label{prop1}
Let $B(\phi)$ be determined by \eqref{def_B_BC12}. Under the assumptions of either Theorem \ref{thm1} or Theorem \ref{thm2}, there is a solution $\phi(z)\in C^{2,\al}(\Om)$ to \eqref{eq_phi_BC12}. Moreover, if the $C^{1,\al}$-norms of $f_0,b_0$ are small enough, the solution $\phi(z)$ to \eqref{eq_phi_BC12} is unique. 
\end{proposition}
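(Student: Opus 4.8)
The plan is to solve \eqref{eq_phi_BC12} by the direct method in the calculus of variations after homogenizing the boundary and shift conditions, and then to promote the weak minimizer to a classical $C^{2,\al}$ solution by elliptic bootstrapping. The structural fact that makes the variational setup viable is that $B$ is \emph{periodic}: since $Z(\tau+J_0)-Z(\tau)=2\pi$ and $b_0$ is $2\pi$-periodic, \eqref{def_B_BC12} gives $B(\tau+J_0)=B(\tau)$, so $B$ and $B'$ are globally bounded and continuous irrespective of the size of the data. To absorb the inhomogeneous Dirichlet data and the $J_0$-shift, I would introduce the affine lift
\begin{equation*}
\Phi_*(z_1,z_2)=\frac{r_1-z_1}{r_1-r_0}\phi_0(z_2)+\frac{z_1-r_0}{r_1-r_0}\phi_1(z_2),
\end{equation*}
which matches $\phi_0,\phi_1$ on the two circles; because $\phi_0(z_2+2\pi)-\phi_0(z_2)=\phi_1(z_2+2\pi)-\phi_1(z_2)=J_0$, the lift $\Phi_*$ inherits the same shift while $\nabla_z\Phi_*$ is genuinely $2\pi$-periodic, hence in $L^2(\Om)$. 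Setting $\phi=\Phi_*+w$, the new unknown lies in $V:=\{w\in H^1_{per}(\Om):w(r_0,\cdot)=w(r_1,\cdot)=0\}$, and $B(\phi)$ is a well-defined function on the annulus precisely because the $J_0$-periodicity of $B$ cancels the shift of $\phi$.

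On $V$ I would minimize
\begin{equation*}
E(w)=\int_\Om \tfrac12\,K(z_1)\nabla_z(\Phi_*+w)\cdot\nabla_z(\Phi_*+w)\,dz-\int_\Om z_1\,B(\Phi_*+w)\,dz,
\end{equation*}
whose Euler--Lagrange equation is the weak form of \eqref{eq_phi_BC12}. Coercivity follows because $K$ is uniformly elliptic on $\overline\Om$ ($r_0\le z_1\le r_1$ forces $K_{11}=z_1\ge r_0>0$ and $K_{22}=1/z_1\ge 1/r_1>0$), so the quadratic part controls $c\|\nabla_z w\|_{L^2}^2$ up to fixed constants, while $|B|\le\|B\|_{L^\infty}$ keeps the second integral bounded; the radial Dirichlet conditions supply a Poincar\'e inequality on $V$, so $E(w)\to+\infty$ as $\|w\|_V\to\infty$. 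Weak lower semicontinuity is immediate: the quadratic term is convex and continuous, and if $w_n\rightharpoonup w$ in $H^1$ then $w_n\to w$ in $L^2$ by Rellich, so dominated convergence (with bound $r_1\|B\|_{L^\infty}$) shows the nonlinear term is weakly continuous. Hence a minimizer $w\in V$ exists and gives a weak solution $\phi$.

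Regularity proceeds by bootstrapping. Since $z_1B'(\phi)\in L^\infty$, Calder\'on--Zygmund estimates give $\phi\in W^{2,p}(\Om)$ for all $p<\infty$, hence $\phi\in C^{1,\beta}(\overline\Om)$ for every $\beta\in(0,1)$. From $f_0\ge\de_0>0$ one has $\phi_0\in C^{2,\al}$ invertible with $Z=\phi_0^{-1}\in C^{2,\al}$, so $B=b_0\circ Z\in C^{1,\al}$ and $B'\in C^{0,\al}$; composing the $\al$-H\"older $B'$ with the now-Lipschitz $\phi$ yields $z_1B'(\phi)\in C^{0,\al}(\overline\Om)$. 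As the boundary data $\phi_0,\phi_1$ are $C^{2,\al}$ (from $f_0,f_1\in C^{1,\al}$, resp.\ $T\in C^{2,\al}$) and $K$ is smooth, global Schauder estimates upgrade $\phi$ to $C^{2,\al}(\overline\Om)$; the $z_2$-shift is harmless here because it is removed by passing to $w$ and the operator and nonlinearity are autonomous in $z_2$.

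For uniqueness, if $\phi_1,\phi_2$ are two solutions with difference $v\in V$, testing $-\div_z(K\nabla_z v)=z_1(B'(\phi_1)-B'(\phi_2))$ against $v$ and using ellipticity and Poincar\'e gives
\begin{equation*}
c\,\|\nabla_z v\|_{L^2}^2\le\int_\Om z_1\big(B'(\phi_1)-B'(\phi_2)\big)v\,dz\le r_1\,\|B'\|_{Lip}\,C_P\,\|\nabla_z v\|_{L^2}^2 ,
\end{equation*}
so $v\equiv0$ once $r_1\|B'\|_{Lip}C_P<c$; equivalently, this is strict convexity of $E$, whose second variation is $\int_\Om K\nabla_z\eta\cdot\nabla_z\eta-\int_\Om z_1B''(\phi)\eta^2$. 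The main obstacle is precisely this step: a mere H\"older bound on $B'$ only yields an a~priori bound on $\|\nabla_z v\|_{L^2}$, not its vanishing, so closing the estimate requires a genuine Lipschitz bound on $B'$, i.e.\ control of $B''(\tau)=b_0''(Z)(Z')^2+b_0'(Z)Z''$. This is where the smallness of $b_0$ (and of $f_0$ through $Z$) enters to force $r_1\|B'\|_{Lip}C_P<c$, and why the sharp quantitative statement aligns with the $b_0\in C^{1,1}$ regularity of the main theorems; all remaining steps are routine once the periodicity of $B$ is exploited.
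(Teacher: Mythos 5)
Your proposal is correct and follows essentially the same route as the paper: the direct method applied to the same energy functional (the paper removes the shift by subtracting $\bar J_0 z_2$ and keeps the inhomogeneous Dirichlet data in the admissible class rather than using your affine lift $\Phi_*$), followed by elliptic regularity to reach $C^{2,\al}$, with uniqueness obtained from a smallness condition tied to the Lipschitz norm of $B'$. The only notable technical difference is in the uniqueness step, where the paper closes the argument with a Schauder-type bound $\|\tilde\psi\|_{C^{2,\al}}\le C\|b_0'\|_{C^{0,1}}\|\tilde\psi\|_{C^{\al}}$ instead of your $L^2$ energy inequality with Poincar\'e; both hinge on the same pointwise estimate for $B'(\phi^1)-B'(\phi^2)$, which is exactly why the $b_0'\in C^{0,1}$ hypothesis you identified is needed.
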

\begin{proof}
Setting
\begin{equation}\label{phi_psi_1}
\psi(z)=\phi(z)-\bar{J}_0 z_2,\,\bar{J}_0=\frac{J_0}{2\pi},
\end{equation}
then it follows from \eqref{eq_phi_BC12} that $\psi(z)$ satisfies
\begin{equation}\label{eq_psi_1}
\begin{cases}
-\div_z(K(z_1)\nabla_z \psi(z))=z_1 B'(\psi+\bar{J}_0 z_2),\,z\in \Om,\\
\psi(r_0,z_2)=\psi_0(z_2):=\phi_0(z_2)-\bar{J}_0 z_2,\,z_2\in \Ga_0, \\
\psi(r_1,z_2)=\psi_1(z_2):=\phi_1(z_2)-\bar{J}_0 z_2,\,z_2\in \Ga_1,\\
\end{cases}
\end{equation}
and the periodic condition:
\begin{equation}\label{peri_psi_1}
\psi(z_1,z_2+2\pi)=\psi(z_1,z_2),\forall z_1\in(r_0,r_1).
\end{equation}
We introduce the following functional
\begin{equation*}
\ml{I}[\psi]:=\int_\Om\,\L(\nabla\psi,\psi,z)\,dz
\end{equation*}
on the admissible space:
\begin{equation*}
\ml{A}:=\{\psi\in H_{per}^1(\Om):\psi(r_0,z_2)=\psi_0,\,\psi(r_1,z_2)=\psi_1\},
\end{equation*}
where
\begin{equation*}
\L(\nabla\psi,\psi,z):=\frac{1}{2}(K(z_1)\nabla\psi)\cdot\nabla\psi-
z_1B(\psi+\bar{J}_0z_2).
\end{equation*}
It is evident that $\L(\nabla\psi,\psi,z)$ is coercive and convex in the first variable. As a result, we derive that there is at least one minimizer $\psi$ so that
\begin{equation*}
\ml{I}[\psi]=\min_{\vphi\in \ml{A}}\ml{I}[\vphi],
\end{equation*}
and $\psi$ is exactly a weak solution to \eqref{eq_psi_1} satisfying periodicity \eqref{peri_psi_1}(cf. \cite{Evans}). Moreover, since $z_1B'(\psi+\bar{J}_0z_2)\in C^{\al}(\Om),\psi_0,\psi_1\in C^{2,\al}(\Om)$, we get $\psi\in C^{2,\al}(\Om)$ by the standard theory \cite{GT2001} of elliptic regularity.

To show the uniqueness, let $\psi^i\in C^{2,\al}(\Om),i=1,2$ are two solutions to \eqref{eq_psi_1}, then the difference $\tilde\psi=\psi^1-\psi^2$ satisfies
\begin{equation}\label{eq_tile_phi}
\begin{cases}
-\div_z(K(z_1)\nabla_z \tilde\psi(z))=z_1B'(\psi^1+\bar{J}_0 z_2)-z_1B'(\psi^2+\bar{J}_0 z_2),\\
\tilde\psi(r_0,z_2)=0,\\
\tilde\psi(r_1,z_2)=0,
\end{cases}
\end{equation}
By the definition of the Bernoulli function, namely \eqref{def_B_BC12}, we get
\begin{equation}\label{diff_berno}
\begin{split}
|z_1&B'(\phi^1)-z_1B'(\phi^2)|=|z_1b_0'(Z(\phi^1))Z'(\phi^1)-z_1b_0'(Z(\phi^2))Z'(\phi^2)|\\
&\leq z_1|b_0'(Z(\phi^1))-b_0'(Z(\phi^2))||Z'(\phi^1)|
+z_1|b_0'(Z(\phi^2))||Z'(\phi^1)-Z'(\phi^2)|\\
&\leq r_1\big( \|b_0'\|_{Lip}\|Z'\|_{C^0}^2+\|b_0'\|_{C^0}\|Z''\|_{C^0} \big)
|\phi^1-\phi^2|\\
&\leq r_1\big( \frac{1}{\de_0^2}\|b_0'\|_{Lip}+\frac{1}{\de_0^3}\|b_0'\|_{C^0}\|f_0'\|_{C^0} \big)
|\phi^1-\phi^2|\\
&\leq C(r_1,\de_0,\|f_0'\|_{C^0})\|b_0'\|_{C^{0,1}}|\phi^1-\phi^2|,
\end{split} 
\end{equation}
where we have used the fact that
\begin{equation*}
Z'=\frac{1}{f_0},\,Z''=-\frac{f_0'}{f_0^3}.
\end{equation*}

It follows from \eqref{eq_tile_phi} that
\begin{equation*}
\|\tilde\psi\|_{C^{2,\al}}\leq C\|b_0'\|_{C^{0,1}}\|\tilde\psi\|_{C^{\al}}.
\end{equation*}

If the $C^{0,1}$-norm of $b_0'$ is sufficiently small, we obtain $\tilde\psi=0$, i.e. $\psi^1=\psi^2$. 
\end{proof}

The results of Theorem \ref{thm1} and Theorem \ref{thm2} are essentially direct consequences of Proposition \ref{prop1}. For brevity, we present here only the detailed proof of Theorem \ref{thm2}.

{\bf Proof of Theorem \ref{thm2}.}
Once the stream function $\phi(z)\in C^{2,\al}$ is constructed by Proposition \ref{prop1}, the solution $(\u,p)$ to the steady Euler equation \eqref{InE_Pol} are sequentially derived by
\begin{equation*}
u_r(z)=\frac{1}{z_1}\p_{z_2}\phi(z)\in C^{1,\al},\,u_\th(z)=-\p_{z_1}\phi(z)\in C^{1,\al},
\end{equation*}
and 
\begin{equation*}
p=B(\phi)-\frac{1}{2}|\u|^2\in C^{1,\al}.
\end{equation*}
It remains to show that $(\u,p)$ satisfies the boundary condition \eqref{BC1}. For the radial velocity, we have
\begin{equation*}
\begin{split}
r_1u_r(r_0,\th)=\p_\th\phi(r_0,\th)=\p_{\th}\phi_0(\th)=f_0(\th).
\end{split}
\end{equation*}

Recalling the definition of $\phi_1(\th)$ in \eqref{phi1_2}, we conclude that $\phi_1(\th)$ is also invertible due to 
\begin{equation*}
\phi_1'(\th)=f_0(T(\th))T'(\th)>0
\end{equation*}
for all $\th\in(0,2\pi]$. Then, by the construction of Bernoulli function $B$ in \eqref{def_B_BC12}, we obtain
\begin{equation}\label{check_B}
\begin{split}
B(\phi)&=b_0(\phi_0^{-1}(\phi))
=(b_0\circ T\circ T^{-1}\circ\phi_0^{-1})(\phi)=(b_0\circ T)\circ(\phi_0\circ T)^{-1}(\phi)\\
&=b_1(\phi_1^{-1}(\phi)).
\end{split}
\end{equation}
Consequently, we have
\begin{equation*}
(\frac{1}{2}|\u|^2+p)(r_i,\th)=B(\phi_i(\th))=(b_i\circ\phi_i^{-1}\circ\phi_i)(\th)=b_i(\th),\,i=0,1.
\end{equation*}
The proof is completed. \qed

\subsection{Iteration method}
Next, let's focus on the boundary condition \eqref{BC3}. Defining 
\begin{equation}
\phi_0(z_2)=z_2+\int_{0}^{z_2}f_0(s)\,ds,
\end{equation}
and noting
\begin{equation*}
\frac{d\phi_0(z_2)}{d z_2}=1+f_0(z_2)>0,
\end{equation*}
we conclude that there exists a function $Z(\tau)$ such that
\begin{equation*}
Z(\tau)=\phi_0^{-1}(\tau),\,\tau\in\R.
\end{equation*}
It's obvious that
\begin{equation*}
\phi_0(z_2+2\pi)-\phi_0(z_2)=J_1,\,
Z(\tau+J_1)-Z(\tau)=2\pi,
\end{equation*}
where $J_1=J_0+2\pi$.

Similarly, the second relation in \eqref{BC3} implies that $A B(\cdot)$ are given by
\begin{equation}\label{def_B_BC3}
A(\tau)=a_0(Z(\tau)),\,B(\tau)=b_0(Z(\tau)),
\end{equation}
which satisfy
\begin{equation*}
B(\tau+J_1)=B(\tau).
\end{equation*}

As a result, $\phi(z)$ solves the following boundary value problem:
\begin{equation}\label{eq_phi_BC3}
\begin{cases}
-\div_z(K(z_1)\nabla_z \phi(z))=z_1 B'(\phi),\\
\phi(r_0,z_2)=\phi_0(z_2), \\
-B'(\phi)\p_{z_2}\phi+
\frac{1}{2}\p_{z_2}(\p_{z_1}\phi)^2+\frac{1}{2 r^2}\p_{z_2}(\p_{z_2}\phi)^2\bigg|_{r=r_1}=-\p_{z_2} p_1
\\
\phi(r_1,0)=-j_0\\
\phi(z_1,z_2+2\pi\ka)=\phi(z_1,z_2)+J_1,\,z_1\in (r_0,r_1).
\end{cases}
\end{equation}

Prior to analyzing the nonlinear boundary value problem \eqref{eq_phi_BC3}, we establish the following lemma regarding the well-posedness of the corresponding linearized system.

\begin{lemma}\label{lem_lin_eli}
Assume that $F(z)\in C^{\al}(\Om),f_i(\th)\in C^{2,\al}(\mb{T}_{2\pi}),\, i=1,2$. If 
\begin{equation}\label{f0_f1_BC3}
\int_{0}^{2\pi}\,f_1(s)\,ds=\int_{0}^{2\pi}\,f_0(s)\,ds:=J_0,
\end{equation}
then the following boundary value problem
\begin{equation}\label{lins_sys}
\begin{cases}
-\div_z(K(z_1)\nabla_z \phi(z))=F(z),\\
\phi(r_0,z_2)=\int_0^{z_2}f_0(s)\,ds, \\
\phi(r_1,z_2)=-j_0+\int_0^{z_2}f_1(s)\,ds,\\
\phi(z_1,z_2+2\pi)=\phi(z_1,z_2)+J_0,\,z_1\in (r_0,r_1),
\end{cases}
\end{equation}
admits a unique solution $\psi\in C^{2,\al}(\Om)$ to \eqref{lins_sys} satisfying
\begin{equation}\label{shau_est}
\|\psi\|_{C^{2,\al}(\Om)}\leq C\big(|j_0|+\|F\|_{C^{\al}}
+\|f_0\|_{C^{1,\al}}+\|f_1\|_{C^{1,\al}}\big).
\end{equation}
for some positive constant $C>0$.
\end{lemma}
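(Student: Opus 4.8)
The plan is to solve \eqref{lins_sys} by reducing it to a genuinely periodic, homogeneous Dirichlet problem for a uniformly elliptic divergence-form operator on the smooth annulus $\Om$, and then to invoke Lax--Milgram together with global Schauder estimates. First I would remove the quasi-periodicity by setting $\psi=\phi-\bar J_0 z_2$ with $\bar J_0=J_0/(2\pi)$. Because $K(z_1)\nabla_z(\bar J_0 z_2)=(0,\bar J_0/z_1)$ has vanishing $\div_z$, the equation is unchanged, $-\div_z(K(z_1)\nabla_z\psi)=F$, while the boundary traces become
\begin{equation*}
\psi(r_0,z_2)=\int_0^{z_2}\big(f_0(s)-f_0^{ave}\big)\,ds,\qquad \psi(r_1,z_2)=-j_0+\int_0^{z_2}\big(f_1(s)-f_1^{ave}\big)\,ds.
\end{equation*}
The flux condition \eqref{f0_f1_BC3} gives $f_0^{ave}=f_1^{ave}=\bar J_0$, so both traces are genuinely $2\pi$-periodic; denote them $\psi_0,\psi_1\in C^{2,\al}(\mb{T}_{2\pi})$, with $\|\psi_0\|_{C^{2,\al}}\leq C\|f_0\|_{C^{1,\al}}$ and $\|\psi_1\|_{C^{2,\al}}\leq |j_0|+C\|f_1\|_{C^{1,\al}}$. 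This recasts the problem as a $2\pi$-periodic Dirichlet problem whose boundary consists of the two circles $r=r_0,r_1$.

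Next I would homogenize the Dirichlet data by subtracting the interpolant $G(z_1,z_2)=\frac{r_1-z_1}{r_1-r_0}\psi_0(z_2)+\frac{z_1-r_0}{r_1-r_0}\psi_1(z_2)$, which is $2\pi$-periodic and $C^{2,\al}$. Then $w=\psi-G$ solves $-\div_z(K\nabla_z w)=\tilde F:=F+\div_z(K\nabla_z G)\in C^\al(\Om)$ with $w=0$ on both circles and $w$ periodic in $z_2$. On the space $V=\{w\in H^1_{per}(\Om):w|_{r=r_0}=w|_{r=r_1}=0\}$ the bilinear form $a(w,v)=\int_\Om (K\nabla_z w)\cdot\nabla_z v\,dz$ is bounded, and it is coercive because $K$ is uniformly elliptic on $[r_0,r_1]$ (indeed $z_1\geq r_0>0$ and $1/z_1\geq 1/r_1>0$) and the Poincar\'e inequality holds for functions vanishing on a full boundary circle. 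Lax--Milgram then yields a unique weak solution $w\in V$, hence the existence of $\psi$, and $\phi=\psi+\bar J_0 z_2$ solves \eqref{lins_sys}. Uniqueness follows because the difference of two solutions lies in $V$ and solves the homogeneous equation, so it vanishes by coercivity.

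For regularity and the bound \eqref{shau_est}, I would note that $K$ has smooth coefficients and is uniformly elliptic, $\partial\Om$ is $C^\infty$, $\tilde F\in C^\al$, and the homogeneous Dirichlet data is trivially $C^{2,\al}$; hence global Schauder theory (cf.\ \cite{GT2001}) gives $w\in C^{2,\al}(\overline\Om)$ and therefore $\psi=w+G\in C^{2,\al}(\Om)$. Applying the global Schauder estimate to $\psi$ directly with its inhomogeneous data gives
\begin{equation*}
\|\psi\|_{C^{2,\al}(\Om)}\leq C\big(\|\psi\|_{C^0(\Om)}+\|F\|_{C^\al(\Om)}+\|\psi_0\|_{C^{2,\al}}+\|\psi_1\|_{C^{2,\al}}\big).
\end{equation*}
Written in non-divergence form, the operator $z_1\p_{z_1z_1}+\tfrac{1}{z_1}\p_{z_2z_2}+\p_{z_1}$ has no zeroth-order term, so the weak maximum principle yields $\|\psi\|_{C^0}\leq \max\{\|\psi_0\|_{C^0},\|\psi_1\|_{C^0}\}+C\|F\|_{C^0}$. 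Combining this with the boundary-data bounds above produces \eqref{shau_est}; the same bound holds for $\phi=\psi+\bar J_0 z_2$ since the added term is controlled by $|J_0|\leq C\|f_0\|_{C^0}$.

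I expect no serious analytic difficulty here, since the operator is a benign uniformly elliptic one on a smooth annular domain. The only points that genuinely require care are organizational: verifying that the flux condition \eqref{f0_f1_BC3} is precisely what renders the reduced traces $\psi_0,\psi_1$ periodic (without it the torus problem would be inconsistent and \eqref{lins_sys} ill-posed), and establishing coercivity in the mixed setting that combines periodicity in $z_2$ with Dirichlet conditions on the two circles, where the correct Poincar\'e inequality must exploit the vanishing of the trace on a boundary component rather than periodicity.
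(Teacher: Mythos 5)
Your proposal is correct and follows essentially the same route as the paper, which simply performs the transformation $\phi=\psi+\bar J_0 z_2$ and then appeals to the standard elliptic theory of \cite{GT2001}; you have merely filled in the details that the paper leaves implicit (periodicity of the reduced traces via \eqref{f0_f1_BC3}, homogenization, Lax--Milgram, global Schauder, and the maximum principle to absorb $\|\psi\|_{C^0}$). All of these steps check out, including the key observation that $\div_z(K(z_1)\nabla_z(\bar J_0 z_2))=0$ so the equation is unaffected by the shift.
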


The result of Lemma \ref{lem_lin_eli} follows immediately from the standard theory \cite{GT2001} of elliptic equation via the transformation $\phi=\psi+\bar J_0 z_2$, where $\bar J_0=\frac{J_0}{2\pi}$. We now proceed to establish the well-posedness of the nonlinear problem \eqref{eq_phi_BC3}.

\begin{proposition}\label{prop2}
Let $B(\phi)$ be given by \eqref{def_B_BC3}. Under the same assumptions as Theorem \ref{thm2}, there exists a unique solution $\phi(z)\in C^{2,\al}(\Om)$ to \eqref{eq_phi_BC3}.  
\end{proposition}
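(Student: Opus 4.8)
The plan is to treat \eqref{eq_phi_BC3} as a perturbation of the equilibrium stream function $\bar\phi(z)=z_2$, associated with $(\bar\u,\bar p)=(\frac1r\e_r,-\frac{1}{2r^2})$, and to run a fixed-point iteration whose linear solver is Lemma \ref{lem_lin_eli}. Writing $\phi=z_2+\psi$, the quasi-periodicity becomes $\psi(z_1,z_2+2\pi)=\psi(z_1,z_2)+J_0$, while the interior equation reduces to $-\div_z(K(z_1)\nabla_z\psi)=z_1B'(z_2+\psi)$ because $\div_z(K(z_1)\nabla_z z_2)=\p_{z_2}(1/z_1)=0$. Since $B(\tau)=b_0(Z(\tau))$ with $Z'=1/(1+f_0)$ and $b_0$ is of size $\vep$, the source $z_1B'(z_2+\psi)$ has $C^\al$-norm of order $\vep$, so the interior part is a small, mildly nonlinear perturbation that the Schauder estimate \eqref{shau_est} controls directly.

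The real difficulty is the nonlinear outer boundary condition, which I would first put in integrated form. Writing $p=B(\phi)-\frac12|\u|^2$ with $|\u|^2=\frac{1}{z_1^2}(\p_{z_2}\phi)^2+(\p_{z_1}\phi)^2$, the prescribed relation $\p_{z_2}p(r_1,\cdot)=\p_{z_2}p_1$ says that $-B(\phi)+\frac12(\p_{z_1}\phi)^2+\frac{1}{2r_1^2}(\p_{z_2}\phi)^2+p_1$ is independent of $z_2$ on $r=r_1$; hence
\[
\frac{1}{2r_1^2}(\p_{z_2}\phi)^2=B(\phi)-\tfrac12(\p_{z_1}\phi)^2-p_1-c\qquad\text{on }r=r_1,
\]
for an a priori unknown constant $c$ (equal to $-\frac{1}{2r_1^2}$ at the equilibrium). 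The key move is to convert this into \emph{Dirichlet} data so that Lemma \ref{lem_lin_eli} applies: given the current iterate $\phi^\ast$, I freeze the nonlinear terms $B(\phi^\ast)$ and $(\p_{z_1}\phi^\ast)^2$ on the right, determine $c$ from the scalar constraint
\[
\int_0^{2\pi} r_1\sqrt{2\bigl(B(\phi^\ast)-\tfrac12(\p_{z_1}\phi^\ast)^2-p_1-c\bigr)}\,dz_2=J_0+2\pi
\]
via the implicit function theorem (the radicand is positive and close to $\frac{1}{2r_1^2}$ for small data, so the integrand is close to $1$), take the positive square root to obtain $\p_{z_2}\phi|_{r_1}$, and integrate from the normalization $\phi(r_1,0)=-j_0$ to define the outer datum $\phi_1^\ast(z_2)$. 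Notice that this constraint is exactly the flux-compatibility condition \eqref{f0_f1_BC3} that Lemma \ref{lem_lin_eli} requires, and that $\phi_1^\ast\in C^{2,\al}(\mb{T}_{2\pi})$ because the radicand lies in $C^{1,\al}$ and is bounded away from zero, so no boundary regularity is lost.

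Define $\mathcal{T}:\phi^\ast\mapsto\phi$ by solving the linear Dirichlet problem \eqref{lins_sys} with source $z_1B'(\phi^\ast)$, inner datum $\phi_0$, and outer datum $\phi_1^\ast$ on the ball $\{\|\phi-z_2\|_{C^{2,\al}}\le M\vep\}$. The estimate \eqref{shau_est} together with the order-$\vep$ bounds on the source and on $\phi_1^\ast-z_2$ shows $\mathcal{T}$ maps the ball into itself; difference estimates — using that $B'$, $Z'$, the square root, and the quadratic terms $(\p_{z_1}\phi)^2$ are Lipschitz on the ball with small constant — show $\mathcal{T}$ is a contraction once $\vep$ is small. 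The unique fixed point is the desired $\phi\in C^{2,\al}(\Om)$, and contractivity gives uniqueness at once. I expect the main obstacle to be precisely the outer boundary condition: proving that the square-root resolution and the implicit determination of $c$ are well-defined and stable (which is what forces the smallness hypothesis of Theorem \ref{thm3}) and that the manufactured Dirichlet datum carries exactly the $C^{2,\al}$ boundary regularity needed to close the iteration without derivative loss; the interior elliptic estimate and the contraction bookkeeping are comparatively standard.
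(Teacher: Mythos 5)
Your proposal is correct and follows the same overall architecture as the paper's proof: perturb around $\bar\phi=z_2$, convert the nonlinear outer boundary condition into manufactured Dirichlet data for the linear solver of Lemma \ref{lem_lin_eli}, pin the one free constant by the flux/quasi-periodicity constraint, and close a small-data contraction. The one genuinely different step is how the outer condition is resolved. The paper keeps it in differentiated form, reads it as a first-order relation $\p_{z_2}f_1=F_1(z_2;\vphi,f_1)$ for the radial-velocity perturbation $f_1=\p_{z_2}\vphi|_{r=r_1}$ (after dividing by $1+f_1$), integrates with the mean $F_1^{ave}$ subtracted to force periodicity, fixes $f_1(0)$ by the flux condition, and therefore must iterate on the pair $(\vphi,f_1)$ and verify \emph{a posteriori} that $F_1^{ave}=0$ at the fixed point so that the true boundary condition is recovered (a verification the paper defers to Section \ref{sec3.3}). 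You instead integrate the boundary condition once in $z_2$, obtaining the algebraic relation $\frac{1}{2r_1^2}(\p_{z_2}\phi)^2=B(\phi)-\frac12(\p_{z_1}\phi)^2-p_1-c$ on $r=r_1$, solve it explicitly by the positive square root, and fix $c$ by the flux constraint via the implicit function theorem; differentiating this relation at the fixed point returns the original condition exactly, so no residual-mean verification is needed, and the iteration runs on $\phi$ alone rather than on a pair. What your route buys is this cleaner closure and a single-unknown fixed point; what it costs is the need to justify that the radicand stays bounded away from zero and that the positive branch is the right one — both automatic in the perturbative ball $\|\phi-z_2\|_{C^{2,\al}}\le M\vep$, where $\p_{z_2}\phi|_{r=r_1}$ is near $1$, and this is also where the uniqueness you obtain (uniqueness within that ball) matches the paper's. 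The regularity bookkeeping is as you say: the radicand lies in $C^{1,\al}(\mb{T}_{2\pi})$ and is bounded below, so $\phi_1^\ast\in C^{2,\al}$ and no derivatives are lost.
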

\begin{proof}
Letting
\begin{equation}\label{phi_vphi}
\phi=\bar\phi+\vphi,\,\bar\phi=z_2,
\end{equation}
and substituting it into \eqref{eq_phi_BC3}, we obtain
\begin{equation}\label{eq_vphi_bd2}
\begin{cases}
-\div_z(K(z_1)\nabla_z \vphi(z))=z_1b_0'(z_2)+F_R(\vphi),\,z\in\Om,\\
\vphi(r_0,z_2)=\int_0^{z_2}f_0(s)\,ds, \\
\vphi(r_1,z_2)=-j_0+\int_0^{z_2}f_1(s;\vphi)\,ds,\\
\vphi(z_1,2\pi)=\vphi(z_1,0)+J_0,\,z_1\in (r_0,r_1),
\end{cases}
\end{equation}
where $F_R(\vphi)$ is the remainder term of second order and $f_1(z_2)$ is given by
\begin{equation}\label{BC3_def_f1}
f_1(z_2)=f_1(0)+\int_{0}^{z_2}F_1(s;\vphi,f_1)-F_1^{ave}\,ds,
\end{equation}
and
\begin{equation}
F_1(z_2;\vphi,f_1):=\bigg[r_1^2b_0'(Z(\phi))Z'(\phi)-\frac{r_1^2}{2(1+f_1)}\p_{z_2}(\p_{z_1}\vphi)^2\bigg]_{r=r_1}-\frac{r_1^2}{1+f_1}\p_{z_2}p_1.
\end{equation}

The constant $f_1(0;\vphi,f_1)$ in \eqref{BC3_def_f1} is determined by 
\begin{equation*}
f_1(0)=\frac{1}{2\pi}\big( J_0+\int_{0}^{2\pi}
\big(F_1(z_2;\vphi,f_1)-F_1^{ave}\big)
(2\pi-z_2)\,dz_2 \big),
\end{equation*}
to ensure the compatibility condition \eqref{f0_f1_BC3} is satisfied.

The well-posedness of \eqref{eq_vphi_bd2} can be established by iteration. To this end, we introduce the Banach space
\begin{equation*}
\Phi_\delta:=\{(\vphi,f_1)\in C^{2,\al}(\Om)\times C^{1,\al}(\mb{T}_{2\pi}):\|\vphi\|_{C^{2,\al}}+\|f_1\|_{C^{1,\al}}\leq \delta\}.
\end{equation*}
Here $\delta>0$ is a small constant to be determined later. For given $(\hat\vphi,\hat f_1)\in\Phi_\delta,$,  we first define $f_1(z_2)\in C^{1,\al}$ as
\begin{equation}
f_1(z_2)=f_1(0)+\int_{0}^{z_2}F_1(s;\hat\vphi,\hat{f}_1)-F_1^{ave}\,ds.
\end{equation}

A direct computation shows that
\begin{equation}\label{est_BC3_f1}
\begin{split}
\|f_1\|_{C^{1,\al}}&\leq |f_1(0)|+C\|F_1(s;\hat\vphi,\hat{f}_1)-F_1^{ave}\|_{C^{\al}}\\
&\leq C\bigg(
|J_0|+\|b_0'\|_{C^{\al}}\|Z'\|_{C^{\al}}+(1+\|\hat{f}_1\|_{C^{\al}})
(\|\hat\vphi\|^2_{C^{2,\al}}+\|\p_{z_2}p_1\|_{C^{\al}})
\bigg)\\
&\leq C(\vep+\vep\de+\de^2).
\end{split}
\end{equation}

With the help of $f_1$ defined above, we conclude that there is a unique solution $\vphi\in C^{2,\al}(\Om)$ to the following linear boundary value problem:
\begin{equation}
\begin{cases}
-\div_z(K(z_1)\nabla_z \vphi(z))=z_1b_0'(z_2)+F_R(\hat\vphi),\\
\vphi(r_0,z_2)=\int_0^{z_2}f_0(s)\,ds, \\
\vphi(r_1,z_2)=-j_0+\int_0^{z_2}f_1(s)\,ds,\\
\vphi(z_1,z_2+2\pi)=\vphi(z_1,z_2)+J_0,\,z_1\in (r_0,r_1),
\end{cases}
\end{equation}
Applying lemma \ref{lem_lin_eli} and using \eqref{est_BC3_f1}, we have
\begin{equation}
\begin{split}
\|\vphi\|_{C^{2,\al}}+\|f_1\|_{C^{1,\al}}\leq C(\vep +\vep\de+\de^2)
\end{split}
\end{equation}

Selecting $\de=3C\vep$ and $\vep<\min\{1,\frac{1}{3C},\frac{1}{9C^2}\}$, we can define $\ml{T}:\Phi_\delta \to\Phi_\delta$ as
\begin{equation*}
\ml{T}(\hat\vphi,\hat f_1)=(\vphi,f_1).
\end{equation*}

We claim that $\ml{T}$ is also contractible. Indeed, for any two given vectors $(\hat\vphi^i,\hat f_1^i)\in\Phi_\delta,\,i=1,2$, the operator $\ml{T}$ generates corresponding image vectors $(\vphi^i,f_1^i),\,i=1,2$. 

By the definition \eqref{BC3_def_f1}, we obtain the following estimate for the difference
\begin{equation}\label{est_BC3_diff_f1}
\begin{split}
\|f^1_1-&f^2_1\|_{C^{1,\al}}\leq |f^1_1(0)-f^2_1(0)|+C\|F_1(s;\hat\vphi^1,\hat{f}_1^1)-F_1(s;\hat\vphi^2,\hat{f}_1^2)\|_{C^{\al}}\\
&\leq C\|b_0'(Z(\hat\phi^1))Z'(\hat\phi^1)-b_0'(Z(\hat\phi^2))Z'(\hat\phi^2)\|_{C^\al}\\
&+C\bigg\|\frac{\p_{z_2}(\p_{z_1}\hat\vphi^1)^2}{1+\hat f_1^1}-\frac{\p_{z_2}(\p_{z_1}\hat\vphi^2)^2}{1+\hat f_1^2}\bigg\|_{C^\al}
+C\bigg\|\frac{\p_{z_2}p_1}{1+\hat f_1^1}-\frac{\p_{z_2}p_1}{1+\hat f_1^2}\bigg\|_{C^\al}\\
&\leq C\vep\|\hat\vphi^1-\hat\vphi^2\|_{C^{2,\al}}+\vep\|\hat f^1_1-\hat f^2_2\|_{C^{\al}},
\end{split}
\end{equation}

where $\hat\phi^i=z_2+\hat\vphi^i,\,i=1,2$, and we have invoked the estimate \eqref{diff_berno}.

Utilizing \eqref{shau_est} again to derive
\begin{equation}\label{BC3_est_diff_vphi}
\begin{split}
\|\vphi^1-\vphi^2\|_{C^{2,\al}}&\leq C\big(
\|b_0(\hat\phi^1)Z(\hat\phi^1)-b_0(\hat\phi^2)Z(\hat\phi^2)\|_{C^{\al}}
+\|f_1^1-f_1^2\|_{C^{1,\al}}\big)\\
&\leq C\vep \|\hat \vphi^1-\hat \vphi^2\|_{C^{2,\al}}
+C\vep\|\hat f^1_1-\hat f^2_2\|_{C^{\al}} .
\end{split}
\end{equation}

By combining estimates \eqref{est_BC3_diff_f1} and \eqref{BC3_est_diff_vphi}, we establish the existence of a small constant $\vep_0>0$ such that for all $\vep\in(0,\vep_0]$, the operator $\ml{T}$ is a contraction. This implies that there exists a unique solution $\vphi(z)\in C^{2,\al}(\Om)$ to the nonlinear boundary value problem \eqref{eq_vphi_bd2}. Proposition \eqref{prop1} then follows directly from the relation \eqref{phi_vphi}.
\end{proof}

The verification of boundary conditions in Theorem \ref{thm3} will be addressed in the final subsection \ref{sec3.3}, where we provide an alternative proof of this result. We highlight that the boundary value problem may become ill-posed if the outer circle condition in \eqref{BC3} is changed from specifying the derivative of pressure to specifying the pressure itself. Let us define the boundary operator $\ga_1^{BC3}$ acting on the pressure $p$ obtained from Theorem \ref{thm3} as
\begin{equation*}
(\ga^{BC3}_1p)(\th)=p(r_1,\th).
\end{equation*}
With this notation, we have the following result.

\begin{thmp}\label{thm3'}
Let $f_0(\th),b_0(\th),p_1(\th)\in C^{1,\al}(\mb{T}_{2\pi})$ and the constant $j_0\in\R$. There exists a small constant $\vep_0>0$ such that for all $\vep\in(0,\vep_0]$, if
\begin{equation*}
\|f_0\|_{C^{1,\al}}+\|b_0\|_{C^{1,\al}}+\|p_1\|_{C^{1,\al}}+|j_0|\leq \vep,
\end{equation*}
then the steady incompressible Euler equation \eqref{InE_Pol} equipped with boundary conditions
\begin{equation}\tag{BC3$'$}\label{BC3'}
\begin{cases}
r_0u_r(r_0,\th)=1+f_0(\th),\\
\frac{1}{2}|\u(r_0,\th)|^2+p(r_0,\th)=b_0(\th),\\
\int_{r_0}^{r_1} u_\th(r,0)\,dr=j_0,\\
p(r_1,\th)=-\frac{1}{2r_1^2}+p_1(\th),
\end{cases}
\end{equation}
admits a unique solution  $(\u,p)\in (C^{1,\al}(\Om))^3$ if and only if the compatibility condition
\begin{equation}\label{BC3_comp}
p_1(0)=\ga_1^{BC3}(0)+\frac{1}{2r_1^2}
\end{equation}
is satisfied.
\end{thmp}
\begin{proof}
The Dirichlet boundary condition for the pressure $p$ in \eqref{BC3'}, when differentiated with respect to $\th$,  directly transforms into the last condition in \eqref{BC3}. According to Theorem \ref{thm3}, there exists a unique solution $(\u,p)\in (C^{1,\al}(\Om))^3$. Consequently, the value of pressure $p$ at the outer circle is determined by
\begin{equation}\label{p1}
\begin{split}
p(r_1,\th)&=p(r_1,0)+\int_0^{\th}\p_\th p_1(s)\,ds\\
&=-\frac{1}{2 r_1^2}+p_1(\th)+(\ga_1^{BC3}p)(0)-p_1(0)+\frac{1}{2r_1^2}.
\end{split}
\end{equation}

This implies that the pressure $p$ obtained by Theorem \ref{thm3} satisfies the boundary condition in \eqref{BC3'} if and only if $p_1(\th)$ fulfills the compatibility condition \eqref{BC3_comp}. This completes the proof.
\end{proof}

\section{The vorticity transport method}\label{sec3}
In this section, we investigate solutions to the Euler equation \eqref{InE_Pol} subject to boundary conditions \eqref{BC4} and \eqref{BC5}, focusing on perturbations around the reference flow $(\bar\u,\bar p)=(\frac{1}{r}\e_r,-\frac{1}{2 r^2})$. Through this section, we decompose the velocity field $\u$ as
\begin{equation*}
\u=\bar \u+\v.
\end{equation*}

Transforming the equation \eqref{eq_trans_om} to polar coordinates yields
\begin{equation}\label{eq_om_pol}
(u_r\p_r+\frac{u_\th}{r}\p_\th)\om=0.
\end{equation}

By applying the method of characteristics to the transport equation \eqref{eq_om_pol}, we may represent its solution as follows. Let $z_2(z_1;\th_0)$ denote the characteristic curve passing through the point $(z_1,z_2)$ with initial condition $z_2(r_0) = \th_0$. This curve is determined by the ordinary differential equation
\begin{equation*}
\begin{cases}
\frac{\p z_2}{\p z_1}(z_1;\th_0)=\frac{v_\th}{1+z_1v_r}(z_1,z_2(z_1;\th_0)),\\
z_2(r_0)=\th_0.
\end{cases}
\end{equation*}

Conversely, we may regard $\th_0$ as a function of $z=(z_1,z_2)$, which we denote by $\th_0=\th_0(z_1,z_2)$. Then the solution to \eqref{eq_om_pol} with initial data 
\begin{equation}\label{om0}
\om(r_0,\th)=\om_0(\th)
\end{equation}
is expressed as $\om(z)=\om_0(\th_0(z))$. Furthermore, we obtain the following results. A complete proof can be found in Proposition 3.8 of \cite{AV2022}.

\begin{lemma}\label{lem_trans}
Assume that $\om_0\in C^{1,\al}(\mb{T}_{2 \pi})$ and $\v\in C^{2,\al}(\Om)$ with $\|\v\|_{C^{2,\al}}\ll 1$, there is a unique solution $\om\in C^{1,\al}(\Om)$ to \eqref{eq_om_pol}-\eqref{om0} satisfying

\begin{equation}\label{est_om_1_al}
\|\om\|_{C^{1,\al}}\leq C(\al,r_0,r_1)\|\om_0\|_{C^{1,\al}}
\end{equation}
for some positive constant $ C(\al,r_0,r_1)>0$. Moreover, let $\om^i,\,i=1,2$, be two solutions of \eqref{eq_om_pol} with initial data $\om^i_0,\,i=1,2$, where the velocity field $\v$ in \eqref{eq_om_pol} is replaced by $\v^1$ and $\v^2$ respectively. Then, there exists a positive constant $C$ depending only on $\al,r_0,r_1$ such that
\begin{equation}\label{est_diff_om}
\|\om^1-\om^2\|_{C^\al}\leq C\big(
\|\om_0^1-\om_0^2\|_{C^\al}+\|\om^1_0\|_{C^{1,\al}}\|\v^1-\v^2\|_{C^\al}
\big).
\end{equation}
\end{lemma}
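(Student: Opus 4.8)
The plan is to solve the transport problem \eqref{eq_om_pol}--\eqref{om0} by the method of characteristics, the key enabling fact being that the smallness $\|\v\|_{C^{2,\al}}\ll1$ keeps the radial velocity $u_r=\frac{1}{z_1}+v_r$ uniformly positive on $[r_0,r_1]$ (since $\frac{1}{z_1}\geq\frac{1}{r_1}$ dominates the small perturbation). Dividing \eqref{eq_om_pol} by $u_r$ recasts it as $\p_{z_1}\om+b(z_1,z_2)\p_{z_2}\om=0$ with $b=\frac{v_\th}{1+z_1v_r}$, so that $z_1$ plays the role of an evolution variable and the data $\om_0$ prescribed on $\{z_1=r_0\}$ propagates across the whole annulus. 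Because $\v\in C^{2,\al}$ and the denominator $1+z_1v_r$ is bounded away from zero, we have $b\in C^{2,\al}(\Om)$; the Cauchy--Lipschitz theorem then produces, for each $\th_0$, a unique characteristic $z_2(z_1;\th_0)$ defined on all of $[r_0,r_1]$, since $b$ is bounded and the $z_1$-interval is compact.

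Next I would analyze the forward flow $\Theta(z_1,\th_0):=z_2(z_1;\th_0)$. Smooth dependence on initial data gives $\Theta\in C^{2,\al}$, and since $b$ is small the derivative $\p_{\th_0}\Theta$ is uniformly close to $1$; hence for each fixed $z_1$ the map $\th_0\mapsto\Theta(z_1,\th_0)$ is an orientation-preserving diffeomorphism of $\mb{T}_{2\pi}$. Inverting it produces $\th_0=\th_0(z_1,z_2)\in C^{2,\al}(\Om)$, with $\nabla\th_0$ uniformly close to $(0,1)$ and all relevant norms bounded by a constant depending only on $\al,r_0,r_1$. Setting $\om(z)=\om_0(\th_0(z))$ yields a function constant along characteristics, which therefore solves \eqref{eq_om_pol} and satisfies $\om(r_0,\th)=\om_0(\th)$ because $\th_0(r_0,\cdot)=\mathrm{id}$. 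Uniqueness is immediate: every $C^1$ solution must be constant along the (uniquely determined, $z_1$-monotone) characteristics and is thus pinned down by its trace $\om_0$ on the inner circle. The bound \eqref{est_om_1_al} then follows from the chain rule $\nabla\om=\om_0'(\th_0)\nabla\th_0$ combined with the H\"older composition inequality: the $2\pi$-periodicity of $\om_0$ controls $\|\om\|_{C^0}$, and the uniform bounds on $\nabla\th_0$ and $[\nabla\th_0]_\al$ yield $\|\om\|_{C^{1,\al}}\leq C(\al,r_0,r_1)\|\om_0\|_{C^{1,\al}}$.

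For the difference estimate \eqref{est_diff_om} I would write
\begin{equation*}
\om^1-\om^2=\big(\om_0^1(\th_0^1)-\om_0^1(\th_0^2)\big)+\big(\om_0^1-\om_0^2\big)(\th_0^2),
\end{equation*}
where $\th_0^i$ is the inverse flow attached to $\v^i$. The second term is bounded by $C\|\om_0^1-\om_0^2\|_{C^\al}$ through composition with the bounded map $\th_0^2$, while the first is bounded by $C\|\om_0^1\|_{C^{1,\al}}\|\th_0^1-\th_0^2\|_{C^\al}$ via the H\"older composition inequality. The crux is therefore the continuous-dependence bound $\|\th_0^1-\th_0^2\|_{C^\al}\leq C\|\v^1-\v^2\|_{C^\al}$, which I would obtain by comparing the two characteristic ODEs: the flow difference solves a linear (variational) equation whose source is $b^1-b^2$ and whose coefficient is $\p_\th b^1$, and a Duhamel/Gr\"onwall argument carried out directly in $C^\al$ gives $\|\Theta^1-\Theta^2\|_{C^\al}\leq C\|b^1-b^2\|_{C^\al}\leq C\|\v^1-\v^2\|_{C^\al}$, after which inversion transfers the bound to $\th_0^i$. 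I expect this last step to be the main technical obstacle: obtaining \emph{linear} (rather than interpolated, sub-linear) dependence forces one to estimate the flow difference directly in the H\"older norm rather than merely in $C^0$, which in turn relies on the uniform $C^\al$-control of the coefficient $\p_\th b^1$ furnished by $\v^1\in C^{2,\al}$.
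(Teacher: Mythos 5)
Your proposal is correct and follows essentially the same route as the paper, which itself only sketches the method of characteristics (the ODE $\p_{z_1}z_2=\frac{v_\th}{1+z_1v_r}$, inversion of the flow to get $\th_0(z)$, and the representation $\om=\om_0\circ\th_0$) and defers the detailed estimates to Proposition 3.8 of \cite{AV2022}. You correctly identify the key points: uniform positivity of $u_r$ so that $z_1$ is an evolution variable, $C^{2,\al}$ regularity of the inverse flow, and the linear-in-$\|\v^1-\v^2\|_{C^\al}$ continuous-dependence bound for the characteristics needed for \eqref{est_diff_om}.
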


Having determined the vorticity field $\om$, we now turn to the analysis of the div-curl problem. Consider the following div-curl system in polar coordinates  
\begin{equation}\label{div_curl_pol}
\begin{cases}
\div_c\w:= \frac{1}{r}\p_r(r w_r)+\frac{1}{r}\p_{\th}w_\th=0,\\
\curl_c\w:=\frac{1}{r} \big( \p_r(r w_\th)-\p_\th w_r\big)=\om,
\end{cases}
\end{equation}
subject to the boundary conditions:
\begin{equation}\label{bc_dvs}
\begin{cases}
r_iw_r(r_i,\th)=f_0(\th),\,i=0,1\,,\\
\int_{r_0}^{r_1}w_\th(r,0)\,dr=j_0.
\end{cases} 
\end{equation}

Under the boundary condition \eqref{bc_dvs}, the well-posedness of equation \eqref{div_curl_pol} is established in the following lemma.

\begin{lemma}\label{lem_div_curl}
Let $\om\in C^{1,\al}(\Om),j_0\in\R$ and $f_i(\th)\in C^{1,\al}({\mb{T}_{2\pi}}),\,i=1,2$, satisfying
\begin{equation}\label{cond_flux_equi}
J_0:= \int_{0}^{2\pi}f_0(\th)\,d\th=\int_{0}^{2\pi}f_1(\th)\,d\th.
\end{equation}
Then, the div-curl problem \eqref{div_curl_pol}-\eqref{bc_dvs} admits a unique solution $\w\in C^{2,\al}(\Om)$ with the estimate
\begin{equation}\label{est_div_curl}
\|\w\|_{C^{k,\al}}\leq C(\al,r_0,r_1)\big(
\|\om\|_{C^{k-1,\al}}+\|f_0\|_{C^{k,\al}}+\|f_1\|_{C^{k,\al}}+|j_0|
\big),\,k=1,2,
\end{equation}
where $C=C(\al,r_0,r_1)$ is a positive constant.
\end{lemma}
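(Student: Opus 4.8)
The plan is to reduce the div-curl system to the scalar elliptic boundary value problem \eqref{lins_sys}, which is already settled by Lemma \ref{lem_lin_eli}, through the introduction of a stream function. Since $\div_c\w=0$ on the annulus, the $1$-form $(z_1 w_r)\,dz_2-w_\th\,dz_1$ is closed, so there is a stream function $\phi$---a priori multi-valued on the non-simply-connected $\Om$---with $w_r=\tfrac{1}{z_1}\p_{z_2}\phi$ and $w_\th=-\p_{z_1}\phi$, exactly as in \eqref{def_strm_fun}. I would first observe that the flux $\int_0^{2\pi}z_1 w_r(z_1,s)\,ds$ is independent of $z_1$: differentiating under the integral sign and using $\div_c\w=0$ together with $2\pi$-periodicity kills the boundary terms. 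By \eqref{bc_dvs} this common value equals both $\int_0^{2\pi}f_0$ and $\int_0^{2\pi}f_1$, so the compatibility hypothesis \eqref{cond_flux_equi} is precisely the requirement that makes $\phi$ consistent, and it fixes the increment of $\phi$ over one angular period to be $J_0$.

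Next I would translate the data. Inserting the representation of $\w$ into $\curl_c\w=\om$ yields $-\div_z(K(z_1)\nabla_z\phi)=z_1\om$, i.e. the operator of \eqref{eq_phi} with source $F(z)=z_1\om$. The normal conditions $r_i w_r(r_i,\cdot)=f_i$ become $\p_{z_2}\phi(r_i,\cdot)=f_i$, hence $\phi(r_i,z_2)=\phi(r_i,0)+\int_0^{z_2}f_i$; the tangential condition $\int_{r_0}^{r_1}w_\th(r,0)\,dr=j_0$ becomes $\phi(r_0,0)-\phi(r_1,0)=j_0$. Using the freedom to fix the additive constant of $\phi$ by setting $\phi(r_0,0)=0$, these are exactly the Dirichlet and quasi-periodic data of \eqref{lins_sys}.

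I would then invoke Lemma \ref{lem_lin_eli}. For the case $k=1$, since $z_1\om\in C^{0,\al}$ and $f_i\in C^{1,\al}$ obey \eqref{cond_flux_equi}, there is a unique $\phi\in C^{2,\al}$ satisfying \eqref{shau_est}; differentiating and bounding $\|z_1\om\|_{C^\al}\le C\|\om\|_{C^{0,\al}}$ gives $\w\in C^{1,\al}$ with the stated estimate. For $k=2$, when $\om\in C^{1,\al}$ and $f_i\in C^{2,\al}$, I would apply the next-order Schauder estimate \cite{GT2001} to the same linear equation to obtain $\phi\in C^{3,\al}$, whence $\w\in C^{2,\al}$ with the bound \eqref{est_div_curl}. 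Uniqueness is inherited from Lemma \ref{lem_lin_eli}: the difference of two solutions is div- and curl-free with vanishing normal flux on both circles and vanishing tangential integral, so its stream function solves the homogeneous version of \eqref{lins_sys} and must vanish, forcing $\w^1=\w^2$.

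The elliptic theory is not the difficulty here, since it is packaged in Lemma \ref{lem_lin_eli}; the real work is the bookkeeping that converts the vector boundary data into scalar Dirichlet data. One must check that $\phi$ is well defined modulo the correct period, that normalizing its additive constant is compatible with the tangential integral condition, and above all that the equality of boundary fluxes \eqref{cond_flux_equi} is exactly the solvability condition forced by $\div_c\w=0$. The $k=2$ estimate additionally requires tracking that one extra derivative on $\om$ and on $f_i$ passes, through the gradient relation between $\w$ and $\phi$, to one extra derivative on $\w$.
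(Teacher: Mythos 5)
Your proposal is correct and follows essentially the same route as the paper: both reduce the div-curl system to a scalar elliptic Dirichlet problem for a stream function (the paper solves the $2\pi$-periodic problem $\Delta_c\phi=-\om$ with data $\int_0^\th(f_i-f_i^{ave})$ and restores the flux via $rw_r=\p_\th\phi+\tfrac{J_0}{2\pi}$, while you keep the quasi-periodic $\phi$ and invoke Lemma \ref{lem_lin_eli}, which is the same reduction). The bookkeeping you flag --- flux independence of $z_1$, the role of \eqref{cond_flux_equi}, the normalization via the tangential integral, and killing the harmonic field $\tfrac{c}{r}\e_\th$ in the uniqueness step --- is all consistent with what the paper does or leaves implicit.
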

\begin{proof}
Thanks to the divergence free condition $\div_c\w=0$, we study the following  elliptic equation:
\begin{equation}\label{eq_aux_phi}
\begin{cases}
\Delta_c\phi:=\frac{1}{r}\big(\p_r(r\p_r\phi)+\p_\th(\frac{1}{r}\p_\th\phi)\big)=-\om,\\
\phi(r_0,\th)=\phi_0(\th):=\int_0^{\th}(f_0(s)-f_0^{ave})ds,\\
\phi(r_1,\th)=\phi_1(\th):=-j_0+\int_0^{\th}(f_1(s)-f_1^{ave})ds,\\
\phi(r,\th+2\pi)=\phi(r,\th).
\end{cases}
\end{equation}
Since $\phi_i,i=0,1$, defined in \eqref{eq_aux_phi} belong to $C^{1,\al}(\mb{T}_{2\pi})$, the problem \eqref{eq_aux_phi} has a unique solution $\phi\in C^{3,\al}(\Om)$ with estimate
\begin{equation*}
\|\phi\|_{C^{k+1,\al}}\leq C\big(
\|\om\|_{C^{k-1,\al}}+\|f_0\|_{C^{k,\al}}+\|f_1\|_{C^{k,\al}}+|j_0|
\big),\,k=1,2.
\end{equation*}

Letting
$$rw_r=\p_{\th}\phi+\frac{J_0}{2\pi},\,w_\th=-\p_r\phi,$$
then it's easy to verify that $\v$ solves the div-curl problem \eqref{div_curl_pol}-\eqref{bc_dvs}, and satisfies the estimation \eqref{est_div_curl}.
\end{proof}

\subsection{Boundary condition \texorpdfstring{\eqref{BC4}}{}} \label{sec3.1}
In this subsection, we consider the steady Euler equation \eqref{InE_Pol} equipped with the following boundary condition:
\begin{equation}\tag{BC4}\label{BC4}
\begin{cases}
r_iu_r(r_i,\th)=1+f_i(\th),\,i=0,1,\\
\int_{r_0}^{r_1}u_\th(r,0)\,dr=j_0,\\
p(r_0,\th)=-\frac{1}{2 r_0^2}+p_0(\th).
\end{cases}
\end{equation}
The result is formally stated in the following theorem.
\begin{theorem}\label{thm4}
Given $f_0(\th),f_1(\th),p_0(\th)\in C^{2,\al}(\mb{T}_{2\pi})$ and the constant $j_0\in\R$. There exists a small constant $1>\vep_0>0$ such that for all $\vep\in(0,\vep_0]$, if
\begin{equation*}
\|f_0\|_{C^{1,\al}}+\|f_1\|_{C^{1,\al}}+\|p_0\|_{C^{1,\al}}+|j_0|\leq \vep,
\end{equation*}
then the steady incompressible Euler equation \eqref{InE_Pol} with the boundary condition \eqref{BC4} has a unique solution  $(\u,p)\in (C^{2,\al}(\Om))^3$. 
\end{theorem}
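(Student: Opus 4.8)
The plan is to realize the vorticity transport scheme $\hat\u\to\om\to\u$ as a fixed point in the perturbation $\v:=\u-\bar\u$, with $\bar\u=\frac1r\e_r$. In these variables the conditions \eqref{BC4} read $r_iv_r(r_i,\th)=f_i(\th)$ for $i=0,1$, the flux constraint $\int_{r_0}^{r_1}v_\th(r,0)\,dr=j_0$, and the inner pressure datum $p(r_0,\th)=-\frac1{2r_0^2}+p_0(\th)$; invoking Lemma~\ref{lem_div_curl} will also force the flux balance $\int_0^{2\pi}f_0\,d\th=\int_0^{2\pi}f_1\,d\th$. Given a small iterate $\hat\v$, I would (i) extract the initial vorticity $\om_0$ from the inner data, (ii) transport it by $\hat\u=\bar\u+\hat\v$ via Lemma~\ref{lem_trans} to get $\om\in C^{1,\al}$, (iii) recover a new $\v$ by solving the div--curl system \eqref{div_curl_pol}--\eqref{bc_dvs} with source $\om$ and normal data $f_i$ via Lemma~\ref{lem_div_curl}, and (iv) show that $\ml T:\hat\v\mapsto\v$ contracts on a small ball.

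The decisive step is (i): unlike the first row of Table~\ref{tab_BC}, the inner circle prescribes the pressure rather than the Bernoulli function, so $\om_0$ is not directly available. Differentiating $B=\frac12|\u|^2+p$ and eliminating $\p_rp,\p_\th p$ through the two momentum equations in \eqref{InE_Pol} yields the pointwise relations $\p_rB=u_\th\om$ and $\frac1r\p_\th B=-u_r\om$. Since $u_r(r_0,\th)=\frac{1+f_0(\th)}{r_0}$ is prescribed, the second relation gives
\begin{equation*}
\om_0(\th)=-\frac{\p_\th B(r_0,\th)}{1+f_0(\th)},\qquad
B(r_0,\th)=\frac{(1+f_0)^2}{2r_0^2}+\tfrac12\,\hat v_\th(r_0,\th)^2-\frac1{2r_0^2}+p_0(\th),
\end{equation*}
which is the formula \eqref{bc4_om0}. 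The subtlety is that $B(r_0,\cdot)$ contains the unknown tangential trace $\hat v_\th(r_0,\cdot)$, so $\om_0$ is defined only \emph{implicitly} and its consistency is enforced by the fixed point. Moreover $\om_0\in C^{1,\al}$ demands $\p_\th B(r_0,\cdot)\in C^{1,\al}$, i.e.\ two derivatives on $f_0$ and $p_0$, which is exactly the hypothesis $f_0,p_0\in C^{2,\al}$; the denominator stays bounded below by $1-O(\vep)$.

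With $\om_0$ in hand the estimates chain: \eqref{est_om_1_al} gives $\|\om\|_{C^{1,\al}}\le C\|\om_0\|_{C^{1,\al}}$, and \eqref{est_div_curl} with $k=2$ gives $\|\v\|_{C^{2,\al}}\le C(\|\om\|_{C^{1,\al}}+\|f_0\|_{C^{2,\al}}+\|f_1\|_{C^{2,\al}}+|j_0|)$. On a ball $\{\|\hat\v\|_{C^{2,\al}}\le\de\}$ the quadratic trace term $\frac12\hat v_\th^2$ and the linear data produce a bound $\|\v\|_{C^{2,\al}}\le C(\text{data}+\de^2)$, so choosing $\de$ comparable to the data size and the data small makes $\ml T$ a self-map with $\|\hat\v\|_{C^{2,\al}}\ll1$, as required for Lemma~\ref{lem_trans}. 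Contraction follows from the difference estimate \eqref{est_diff_om} for the transport step together with \eqref{est_div_curl}, using that $\om_0$ depends on the iterate only through the quadratic trace term, whose Lipschitz constant is $O(\de)$; the delicate bookkeeping is keeping the $C^{1,\al}$-level smallness of the data compatible with the $C^{2,\al}$-regularity the transport lemma consumes.

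Finally I would recover $p$ and verify the boundary data. At the fixed point the transport identity $(\u\cdot\nabla)\om=0$ holds for the \emph{actual} $\u$, which is precisely the integrability condition for the pair $\p_rB=u_\th\om$, $\frac1r\p_\th B=-u_r\om$; moreover $\int_0^{2\pi}\p_\th B\,d\th$ is independent of $r$ (its $r$-derivative equals $\int_0^{2\pi}\p_\th(u_\th\om)\,d\th=0$) and vanishes at $r_0$ since $B(r_0,\cdot)$ is periodic, so $B$ is a single-valued $C^{2,\al}$ function on $\Om$. Setting $p:=B-\frac12|\u|^2$ and using $(\u\cdot\nabla)\u=\frac12\nabla|\u|^2-\u\times\om$ recovers the momentum equation, and fixing the additive constant so that $B(r_0,0)$ matches the prescribed value, together with $\p_\th B(r_0,\th)=\p_\th\big(\frac12|\u(r_0,\th)|^2-\frac1{2r_0^2}+p_0\big)$, yields $p(r_0,\th)=-\frac1{2r_0^2}+p_0(\th)$; the normal-velocity and flux conditions hold by the div--curl construction. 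I expect the main obstacle to be exactly this implicit determination of $\om_0$: it couples the vorticity data to the unknown tangential trace, which both forces the smallness/regularity threading through the contraction and demands the extra derivative on $f_0,p_0$, and it is the feature distinguishing \eqref{BC4} from the directly solvable cases.
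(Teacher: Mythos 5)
Your proposal is correct and follows essentially the same route as the paper: the same fixed-point scheme on a small ball in $C^{2,\al}$, the same implicit formula \eqref{bc4_om0} for $\om_0$ (derived from $\p_\th B=-ru_r\om$ at $r=r_0$), the same use of Lemmas \ref{lem_trans} and \ref{lem_div_curl} for the self-map and contraction estimates, and the same regularity bookkeeping. The only cosmetic difference is that you recover the pressure by integrating $\nabla B$ and setting $p=B-\frac12|\u|^2$, whereas the paper integrates the curl-free convective field $\mf{G}$ directly; the single-valuedness check is the same computation, and the only point you leave implicit is the converse direction (every $C^{2,\al}$ solution yields a fixed point), which the paper's Step 4 records to conclude uniqueness of the solution rather than merely of the fixed point.
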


\begin{proof}
The proof is divided into four steps.

{\bf Step 1.} The design of the iteration scheme. Consider the Banach space
\begin{equation}\label{bc4_def_V}
V_\de:=\{\v\in C^{2,\al}_\star(\Om):\|\v\|_{C^{2,\al}}\leq \de\},
\end{equation}
here $\de>0$ is a small constant to be determined later. Fixing $\hat\v\in V_\de$,
we define $\om\in C^{1,\al}(\Om)$ as the unique solution of the following transport equation
\begin{equation}\label{bc4_eq_om}
\begin{cases}
\big( (\frac{1}{r}+\hat v_r)\p_r+\frac{\hat v_\th}{r}\p_\th)\om=0,\\
\om(r_0,\th)=\om_0(\th)
\end{cases}
\end{equation}
where 
\begin{equation}\label{bc4_om0}
\om_0(\th)=-\frac{1}{r_0^2}\p_\th f_0(\th)-\frac{1}{1+f_0}\p_\th p_0(\th)-\frac{1}{2(1+f_0)}\p_\th |\hat v_\th(r_0,\th)|^2.
\end{equation}
It follows from \eqref{est_om_1_al} that
\begin{equation}\label{bc4_est_om}
\begin{split}
\|\om\|_{C^{1,\al}}&\leq C
\bigg\|\frac{1}{r_0^2}\p_\th f_0(\th)+\frac{1}{1+f_0}\p_\th p_0(\th)+\frac{1}{2(1+f_0)}\p_\th |\hat v_\th(r_0,\th)|^2
\bigg\|_{C^{1,\al}}\\
&\leq C\bigg(
\|f_0\|_{C^{2,\al}}+(1+\|f_0\|_{C^{1,\al}})^2(\|p_0\|_{C^{p,\al}}+
\|\hat\v\|^2_{C^{2,\al}})
\bigg)\\
&\leq C(\vep+\de^2).
\end{split}
\end{equation}

Next, we define $\v$ solving the following div-curl problem:
\begin{equation}\label{bc4_eq_DV}
\begin{cases}
\frac{1}{r}\p_r(r v_\th)-\frac{1}{r}\p_\th v_r=\om,\\
\frac{1}{r}\p_r(r v_r)+\frac{1}{r}\p_{\th}v_\th=0,\\
r_iv_r(r_i)=f_i,\,i=0,1,\\
\int_{r_0}^{r_1}v_\th(r,0)\,dr=j_0.
\end{cases}
\end{equation}
Utilizing \eqref{est_div_curl} and \eqref{bc4_est_om}, we have
\begin{equation}\label{bc4_est_v}
\begin{split}
\|\v\|_{C^{2,\al}}&\leq C\big(
\|\om\|_{C^{1,\al}}+\|f_0\|_{C^{2,\al}}+\|f_1\|_{C^{2,\al}}+|j_0|
\big)\\
&\leq C(\vep+\de^2).
\end{split}
\end{equation}

Taking $\de=2C\vep$ and $\vep<\min\{1,\frac{1}{4 C^2}\}$, then the mapping $\ml{T}:V_\de\to V_\de$ given by
\begin{equation}\label{bc4_def_T}
\ml{T}\hat \v=\v
\end{equation}
is well defined due to the estimate \eqref{bc4_est_v}. 

{\bf Step 2.} We claim that the mapping $\ml{T}$ defined in \eqref{bc4_def_T} has a unique fixed point in $V_\de$. Since $V_\de$ is a closed subspace of $C^{1,\al}$, it is sufficient to show that $\ml{T}$ is contractible in the Banach space $C^{1,\al}(\Om)$.

$$\v  $$

For any given two vectors $\hat \v^i\in V_\de,\,i=1,2$, let $\v^i=\ml{T}\hat \v^i,\, i=1,2$. The estimate \eqref{est_diff_om} suggests that
\begin{equation}\label{bc4_est_diff_om}
\begin{split}
\|\om^1-\om^2\|_{C^\al}&\leq C\big(
\|\om_0^1-\om_0^2\|_{C^\al}+\|\om^1_0\|_{C^{1,\al}}\|\hat\v^1-\hat\v^2\|_{C^\al}
\big).\\
&\leq C\big(
\|(\hat v_\th^1-\hat v_\th^2)\p_\th \hat v_\th^1+(\p_\th\hat{v}_\th^1-\p_\th\hat{v}_\th^2)\hat v_\th^2\|_{C^\al}+
\vep \|\hat\v^1-\hat\v^2\|_{C^\al}
\big)\\
&\leq C\vep \|\hat\v^1-\hat\v^2\|_{C^{1,\al}}.
\end{split}
\end{equation}
The above inequality, together with the linearity of the div-curl system \eqref{bc4_eq_DV} and the estimate \eqref{est_div_curl} with $k=1$, leads to
\begin{equation}
\begin{split}
\|\ml{T}\hat\v^1-\ml{T}\hat\v^2\|_{C^{1,\al}}&=\|\v^1-\v^2\|_{C^{1,\al}}\leq C\|\om^1-\om^2\|_{C^\al}\\
&\leq C\vep \|\hat\v^1-\hat\v^2\|_{C^{1,\al}}.
\end{split}
\end{equation}
Therefore, there exists a small constant $\vep_0>0$ such that for any $0<\vep<\vep_0$, the mapping $\ml{T}$ is a contraction. Banach's fixed point theorem implies that $\ml{T}$ has a unique fixed point $\v\in V_\de$, which examines the claim.

{\bf Step 3.} The construction of pressure $p$. Assume that $\v\in V_\de$ is a fixed point of $\ml{T}$ and set $\u=\bar\u+\v$. The first equation in \eqref{bc4_eq_DV} yields to
\begin{equation*}
\curl_c\u=\curl_c\v=\om,
\end{equation*}
here $\om$ is the unique solution of the transport equation
\begin{equation*}
\u\cdot\nabla_c\,\om=0,\,\om_0=\curl_c\v\big|_{r=r_0}.
\end{equation*}

Introduce $\mf{G}=G_r\e_r+G_\th\e_\th$, where $G_r,G_\th$ are given by
\begin{equation}\label{def_G}
\begin{split}
G_r&=(\u\cdot\nabla_c)u_r-\frac{u_\th^2}{r},\\
G_\th&=(\u\cdot\nabla_c)u_\th+\frac{u_r u_\th}{r}.
\end{split}
\end{equation}

Then, a direct calculus by using the incompressibility condition $\div_c\u=0$ results in 
\begin{equation}\label{vort_vanish}
\curl_c\mf{G}=\frac{1}{r}\p_r(rG_\th)-\frac{1}{r}\p_\th G_r=\u\cdot \nabla_c\,\om=0.
\end{equation}

Therefore, we can define a scalar function $g(z)$ by
\begin{equation}\label{def_g}
g(z)=-\int_{\ga_z}\mf{G}\cdot d\mf{l},
\end{equation}
where $d\mf{l}=dr\e_r+rd\th \e_\th$ and $\ga_z$ is an arbitrary curve connecting $(r_0,0)$ with $(r,\th)$. In particular, one has $g(r_0,0)=0$.

By construction, we have 
\begin{equation}\label{eq_G_g}
\mf{G}+\nabla_c\,g=0.
\end{equation}
Moreover, we shall show that $g(z)$ remains invariant under translations by a period in the direction of $\e_\th$. It's sufficient to demonstrate that 
\begin{equation*}
\int_{0}^{2\pi} G_\th(r_0,\th)r_0 d\th=0.
\end{equation*}

By the definition, we have
\begin{equation}\label{om0_11}
\curl_c\u\big|_{r=r_0}=\frac{1}{r}(\p_r(ru_\th)-\p_\th u_r)\big|_{r=r_0}=
\big[\frac{1}{r}u_\th+\p_r u_\th\big]_{r=r_0}-\frac{1}{r_0^2}\p_\th f_0,  
\end{equation}
Notice that the initial condition $\om_0$  is given by
\begin{equation}\label{om0_12}
\curl_c\u\big|_{r=r_0}=\om_0=-\frac{1}{r_0^2}\p_\th f_0(\th)-\frac{1}{1+f_0}\p_\th p_0(\th)-\frac{1}{2(1+f_0)}\p_\th |u_\th(r_0,\th)|^2.
\end{equation}

Comparing \eqref{om0_11} with \eqref{om0_12}, we get 
\begin{equation}\label{deri_p0}
r_0G_\th(r_0,\th)=-\p_\th p_0(\th),
\end{equation}
and
\begin{equation*}
\int_{0}^{2\pi} r_0 G_\th(r_0,\th) d\th=-
\int_{0}^{2\pi} \p_\th p_0(\th) d\th=0.
\end{equation*}

Hence, the pressure $p$ can be defined as
\begin{equation}\label{bc4_def_p}
p(r,\th)=g(r,\th)+g_0,\,g_0=-\frac{1}{2r_0^2}+p_0(0).
\end{equation}

{\bf Step 4.} We check that $(\u,p)\in (C^{2,\al}(\Om))^3$ is a solution to the steady incompressible Euler equation \eqref{InE_Pol}, if and only if $\v\in V_\de$ is a fixed point of $\ml{T}$ and the pressure $p$ is constructed by \eqref{bc4_def_p}.

On the one hand, let $\v\in V_\de$ be a fixed point and set $\u=\bar\u+\v$. The definition of $p$ in \eqref{bc4_def_p} and \eqref{eq_G_g} yield to
\begin{equation}\label{eq_G_p}
{\mf{G}+\nabla_c p}=0,
\end{equation}
which are exactly the momentum equations to the Euler equation \eqref{InE_Pol}. We just examine the boundary condition of $p$ since the boundary conditions for $\u$ are immediately obtained. Using \eqref{bc4_def_p} and \eqref{deri_p0}, we obtain
\begin{equation*}
\begin{split}
p(r_0,\th)&=g_0+\int_0^\th\p_\th g(r_0,z_2)\,dz_2=g_0-
\int_0^\th G_r(r_0,z_2)\,r_0dz_2\\
&=-\frac{1}{2r_0^2}+p_0(0)+\int_0^\th \p_\th p_0(z_2)\,r_0dz_2=-\frac{1}{2r_0^2}+p_0(\th).
\end{split}
\end{equation*}

Conversely, assume that $(\u,p)\in (C^{2,\al}(\Om))^3$ is a solution to \eqref{InE_Pol} subject to the boundary condition \eqref{BC4}. Then, taking $\curl_c$ on the momentum equations in \eqref{InE_Pol} to derive
\begin{equation*}
\u\cdot\nabla_c(\curl_c\u)=(\bar\u+\v)\cdot\nabla_c(\curl_c\v)=0.
\end{equation*}
Observe that
\begin{equation*}
\curl_c\v=-\frac{1}{r u_r}(v_r\p_\th v_r+v_\th \p_\th v_\th+\p_\th p).
\end{equation*}
Substituting the boundary data in \eqref{BC4} into the above identity, we have
\begin{equation*}
\curl_c\v\big|_{r=r_0}=-\frac{1}{r_0^2}\p_\th f_0(\th)-\frac{1}{1+f_0}\p_\th p_0(\th)-\frac{1}{2(1+f_0)}\p_\th |\hat v_\th(r_0,\th)|^2.
\end{equation*}
It follows that 
\begin{equation*}
\curl_c\v=\om,\,\div_c\v=0.
\end{equation*}
By the uniqueness of the solution to the div-curl problem \eqref{bc4_eq_DV}, we conclude that $\v=\ml{T}\v$ is a fixed point. Hence, the proof is completed.
\end{proof}

\subsection{Boundary condition \texorpdfstring{\eqref{BC5}}{}} \label{sec3.2}
In this subsection, we investigate a boundary condition formulated by replacing the radial velocity at the outer circle with the tangential derivative of pressure. Alternatively, when an additional compatibility condition is satisfied, it becomes feasible to prescribe the pressure itself on the outer boundary instead of its derivative. Specifically, the first class of boundary conditions is given by
\begin{equation}\tag{BC5}\label{BC5}
\begin{cases}    
r_0u_r(r_0,\th)=1+f_0(\th),\\
\int_{r_0}^{r_1}u_\th(r,0)\,dr=j_0,\\
p(r_0,\th)=-\frac{1}{2 r_0^2}+p_0(\th),\\
\p_\th p(r_1,\th)=\p_\th p_1(\th).
\end{cases}
\end{equation}
The well-posedness is stated in the following theorem.
\begin{theorem}\label{thm5}
Given $f_0(\th),p_0(\th),p_1(\th)\in C^{2,\al}(\mb{T}_{2\pi})$ and the constant $j_0\in\R$. There exists a small constant $1>\vep_0>0$ such that for all $\vep\in(0,\vep_0]$, if
\begin{equation*}
\|f_0\|_{C^{1,\al}}+\|f_1\|_{C^{1,\al}}+\|p_0\|_{C^{1,\al}}+|j_0|\leq \vep,
\end{equation*}
then the steady incompressible Euler equation \eqref{InE_Pol} with the boundary condition \eqref{BC5} has a unique solution  $(\u,p)\in (C^{2,\al}(\Om))^3$. 
\end{theorem}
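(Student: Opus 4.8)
The plan is to adapt the vorticity-transport iteration of Theorem \ref{thm4} almost verbatim, the only genuinely new feature being that the outer radial flux $f_1$ is no longer prescribed but must be \emph{reconstructed} from the outer pressure-derivative datum $\p_\th p_1$. Since the inner boundary data in \eqref{BC5} coincide exactly with those in \eqref{BC4}, the initial vorticity is again
\begin{equation*}
\om_0(\th)=-\frac{1}{r_0^2}\p_\th f_0-\frac{1}{1+f_0}\p_\th p_0-\frac{1}{2(1+f_0)}\p_\th|\hat v_\th(r_0,\th)|^2,
\end{equation*}
so Lemma \ref{lem_trans} produces $\om$ from a given $\hat\v$ with the same bound as in \eqref{bc4_est_om}. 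The task is to close the loop by determining $f_1$ before invoking the div-curl solver of Lemma \ref{lem_div_curl}.

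I would iterate on the \emph{pair} $(\v,f_1)$ in the product ball $\Phi_\de:=V_\de\times\{f_1\in C^{1,\al}(\mb{T}_{2\pi}):\|f_1\|_{C^{1,\al}}\leq\de\}$, mirroring the space used in Proposition \ref{prop2}. Given $(\hat\v,\hat f_1)\in\Phi_\de$, first solve the transport problem for $\om$, then reconstruct $f_1$ from the tangential momentum balance evaluated at $r=r_1$. Repeating the computation behind \eqref{deri_p0} at the outer circle gives the identity
\begin{equation*}
r_1G_\th(r_1,\th)=(1+f_1)\om(r_1,\th)+\frac{1}{2r_1^2}\p_\th(1+f_1)^2+\frac{1}{2}\p_\th|\hat v_\th(r_1,\th)|^2,
\end{equation*}
and imposing $r_1G_\th(r_1,\th)=-\p_\th p_1$ yields a first-order ODE for $f_1$ of exactly the type appearing in \eqref{BC3_def_f1} (with $\hat f_1$ frozen in the coefficients $\tfrac{1}{1+\hat f_1}$). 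I would define $f_1$ by integrating this ODE after subtracting its mean, to force periodicity, and fixing the constant $f_1(0)$ so that $\int_0^{2\pi}f_1\,d\th=J_0$, which is the compatibility \eqref{cond_flux_equi} required by Lemma \ref{lem_div_curl}. With $f_1$ in hand, solve \eqref{bc4_eq_DV} for $\v$ and set $\ml{T}(\hat\v,\hat f_1)=(\v,f_1)$.

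The self-mapping and contraction of $\ml{T}$ then follow by assembling estimates already in the paper: \eqref{bc4_est_om} and \eqref{est_diff_om} control $\om$ (and hence its trace $\om(r_1,\cdot)$); a first-order ODE estimate for $f_1$, analogous to \eqref{est_BC3_f1} and \eqref{est_BC3_diff_f1} but with $\om(r_1,\cdot)$ and $\hat v_\th(r_1,\cdot)$ in place of the Bernoulli term, controls $f_1$; and \eqref{est_div_curl} with $k=1,2$ controls $\v$. Every nonlinear contribution is quadratic or carries a factor of $\vep$, so for $\de=C\vep$ and $\vep$ small the map is a contraction on $\Phi_\de$, and Banach's theorem yields a unique fixed point. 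The pressure is then reconstructed exactly as in Step~3 of Theorem \ref{thm4}, via $p=g+g_0$ with $g=-\int_{\ga_z}\mf{G}\cdot d\mf{l}$.

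The main obstacle is the \emph{consistency} of the $f_1$ reconstruction: subtracting the mean always yields a periodic $f_1$, but to recover the exact condition $\p_\th p(r_1,\th)=\p_\th p_1(\th)$ at the fixed point one must show that the subtracted mean actually vanishes, i.e. that $\int_0^{2\pi}(1+f_1)\om(r_1,\th)\,d\th=0$. This is not visible from the ODE alone; I expect to obtain it from the vorticity-flux identity. Since $\div_c(\om\u)=\om\,\div_c\u+\u\cdot\nabla_c\om=0$ for the fixed-point velocity $\u=\bar\u+\v$, integrating over $\Om$ and using periodicity gives
\begin{equation*}
\int_0^{2\pi}(1+f_1)\om(r_1,\th)\,d\th=\int_0^{2\pi}(1+f_0)\om_0(\th)\,d\th,
\end{equation*}
and the right-hand side vanishes because the special form of $\om_0$ makes $(1+f_0)\om_0$ a sum of $\th$-derivatives of periodic functions. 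Verifying that this cancellation is genuinely available at the fixed point, so that the reconstructed $f_1$ enforces the true outer pressure condition rather than an averaged surrogate, is the delicate point, exactly as in the verification deferred for Theorem \ref{thm3}.
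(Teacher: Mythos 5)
Your proposal is correct and reproduces the paper's proof almost exactly: the paper also iterates on the pair $(\v,f_1)$ in the product ball \eqref{def_mlV}, determines $\om$ from the same initial vorticity \eqref{bc5_om0}, reconstructs $f_1$ by integrating precisely the ODE $\p_\th f_1=\ml{R}(\th;\hat\v,\hat f_1)$ with its mean subtracted and the constant $f_1(0)$ fixed by the flux condition (see \eqref{bc5_def_f1}--\eqref{def_f1_0}), and then closes the argument by contraction and the pressure construction of Theorem \ref{thm4}. The only point where you diverge is the final consistency check that the subtracted mean $\ml{R}^{ave}$ vanishes at the fixed point. The paper derives the pointwise identity $\p_\th p(r_1,\th)-\p_\th p_1(\th)=\frac{u_r(r_1,\th)}{r_1}\ml{R}^{ave}$ and integrates it over $\mb{T}_{2\pi}$, using the periodicity of the reconstructed pressure trace (itself guaranteed by the inner-circle identity \eqref{deri_p0}) to force $\ml{R}^{ave}=0$; you instead integrate $\div_c(\om\u)=0$ over the annulus to get $\int_0^{2\pi}(1+f_1)\om(r_1,\th)\,d\th=\int_0^{2\pi}(1+f_0)\om_0(\th)\,d\th=0$, the right-hand side vanishing because $(1+f_0)\om_0=-\frac{1}{2r_0^2}\p_\th(1+f_0)^2-\p_\th p_0-\frac12\p_\th|v_\th(r_0,\cdot)|^2$ is an exact $\th$-derivative. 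Both mechanisms are valid and of comparable length; your version requires the small extra observation (which does hold, by multiplying the fixed-point ODE by $(1+f_1)/r_1^2$ and integrating) that $\int_0^{2\pi}(1+f_1)\om(r_1,\cdot)\,d\th=0$ is indeed equivalent to $\ml{R}^{ave}=0$, since $\int_0^{2\pi}(1+f_1)\,d\th=2\pi+J_0\neq 0$.
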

\begin{proof}
The proof follows the same general framework as Theorem \ref{thm4}, but requires significant modifications at each step. We now detail these necessary adaptations.

{\bf Step 1.} Since the value of radial velocity $v_r(r_1,\th)$ on the outer circle, which is essential for solving the div-curl problem, remains unspecified a priori, we must reconstruct it iteratively and accordingly define the iteration space as
\begin{equation}\label{def_mlV}
\ml{V}_\de:=\{(\v,f_1)\in C^{2,\al}_\star(\Om)\times C^{2,\al}(\mb{T}_{2\pi}):\|\v\|_{C^{2,\al}}+\|f_1\|_{C^{2,\al}}\leq \de\},
\end{equation}
here $\de>0$ is a small constant to be determined later. Fixing $(\hat\v,f_1)\in \ml{V}_\de$, we define $\om\in C^{1,\al}(\Om)$ as the unique solution of the following transport equation
\begin{equation}\label{bc5_eq_om}
\begin{cases}
\big( (\frac{1}{r}+\hat v_r)\p_r+\frac{\hat v_\th}{r}\p_\th)\om=0,\\
\om(r_0,\th)=\om_0(\th)
\end{cases}
\end{equation}
where 
\begin{equation}\label{bc5_om0}
\om_0(\th)=-\frac{1}{r_0^2}\p_\th f_0(\th)-\frac{1}{1+f_0}\p_\th p_0(\th)-\frac{1}{2(1+f_0)}\p_\th |\hat v_\th(r_0,\th)|^2.
\end{equation}
It follows from \eqref{est_om_1_al} that
\begin{equation}\label{bc5_est_om}
\begin{split}
\|\om\|_{C^{1,\al}}\leq C(\vep+\de^2).
\end{split}
\end{equation}

Next, we introduce a $2\pi$-periodic function $f_1(\th)$ in terms of $\hat\v$, $\om$ solving \eqref{bc5_eq_om} and boundary data. More precisely, we define
\begin{equation}\label{bc5_def_f1}
f_1(\th)=f_1(0)+\int_0^\th \ml{R}(z_2;\hat\v,\hat f_1)-\ml{R}^{ave}\,dz_2,
\end{equation}
where
\begin{equation}\label{def_R5}
\ml{R}(\th;\v(r,\th),f(\th))=-r_1^2\om(r_1,\th)-\frac{r_1^2}{1+f}\p_\th p_1(\th)-\frac{r_1^2}{2(1+f)}\p_\th |v_\th(r_1,\th)|^2.
\end{equation}

The mass flux of the velocity field $\v$ going through the outer circle is given by
\begin{equation*}
\int_0^{2\pi}f_1(\th)\,d\th
=2\pi f_1(0)+\int_{0}^{2\pi}\int_0^\th \ml{R}(z_2;\hat\v,\hat f_1)
-\ml{R}^{ave}\,dz_2\,d\th.
\end{equation*}
Since $\div_c\v=0$, the constant $f_1(0)$ is determined by
\begin{equation}\label{def_f1_0}
f_1(0)=\frac{1}{2\pi}\big( J_0+\int_{0}^{2\pi}
\big(\ml{R}(z_2;\hat\v,\hat f_1)-\ml{R}^{ave}\big)
(2\pi-z_2)\,dz_2 \big),
\end{equation}
where $J_0=\int_0^{2\pi}f_0(s)ds$. 

Using \eqref{bc5_est_om} and \eqref{bc5_def_f1}, we get 
\begin{equation}\label{bc5_est_f1}
\begin{split}
\|f_1\|_{C^{2,\al}}&\leq C\bigg(|f_1(0)|+|\ml{R}^{ave}|
+\|\ml{R}(\th;\hat v,\hat f_1)\|_{C^{1,\al}}\bigg)\\
&\leq \bigg(  |J_0|+\|\om\|_{C^{1,\al}}+(1+\|\hat f_1\|_{C^{1,\al}})^2
\big(\|p_1\|_{C^{2,\al}} +\|\hat\v\|_{C^{2,\al}}^2\big)
\bigg)\\
&\leq C(\vep+\vep\de+\de^2).
\end{split}
\end{equation}

Now, we can define $\v$ as the unique solution of the following div-curl problem
\begin{equation}\label{bc5_eq_DV}
\begin{cases}
\frac{1}{r}\p_r(r v_\th)-\frac{1}{r}\p_\th v_r=\om,\\
\frac{1}{r}\p_r(r v_r)+\frac{1}{r}\p_{\th}v_\th=0,\\
r_iv_r(r_i)=f_i,\,i=0,1,\\
\int_{r_0}^{r_1}v_\th(r,0)\,dr=j_0.
\end{cases}
\end{equation}

Utilizing \eqref{est_div_curl} again together with estimate \eqref{bc5_est_f1}, we derive
\begin{equation}\label{bc5_est_v_f}
\begin{split}
\|\v\|_{C^{2,\al}}+\|f_1\|_{C^{2,\al}}&\leq C\big(
\|\om\|_{C^{1,\al}}+\|f_0\|_{C^{2,\al}}+\|f_1\|_{C^{2,\al}}+|j_0|
\big)\\
&\leq C(\vep+\vep\de+\de^2).
\end{split}
\end{equation}

Choosing $\de=3C\vep$ and $\vep<\min\{1,\frac{1}{3C},\frac{1}{9C^2}\}$, the mapping $\ml{T}:\ml{V}_\de\to \ml{V}_\de$ given by
\begin{equation}\label{bc5_def_T}
\ml{T}\hat V=V,
\end{equation}
where $\hat V=(\hat \v,\hat f_1)$ and $V=(\v,f_1)$, is well defined by virtue of the estimate \eqref{bc5_est_v_f}. 

{\bf Step 2.} We examine the contraction of the mapping $\ml{T}$ in the Banach space $C^{1,\al}(\Om)$.

Given any two vectors $\hat V^i=(\hat\v^i,\hat f_1^i)\in \ml{V}_\de,\,i=1,2$, denote their images under the mapping $\ml{T}$ as $\hat V^i=\ml{T}V^i=(\v^i,f_1^)$ for $i=1,2$.

The definition of $\ml{T}$ and the estimate \eqref{est_div_curl} imply that
\begin{equation}\label{bc5_est_diff_V}
\begin{split}
\|V^1-V^2\|_{C^{1,\al}}&=\|\v^1-\v^2\|_{C^{1,\al}}+\|f_1^1-f_1^2\|_{C^{1,\al}}\\
&\leq C\big( \|\om^1-\om^2\|_{C^{\al}}+\|f_1^1-f_1^2\|_{C^{1,\al}} \big)    
\end{split}
\end{equation}

Following an analogous argument to the derivation of estimate \eqref{bc5_est_diff_om}, one can easily obtain
\begin{equation}\label{bc5_est_diff_om}
\begin{split}
\|\om^1-\om^2\|_{C^\al}\leq C\vep \|\hat\v^1-\hat\v^2\|_{C^{1,\al}}.
\end{split}
\end{equation}

In order to handle the last term in \eqref{bc5_est_diff_V}, we denote
\begin{equation*}
F^i(\th)=\ml{R}(\th;\hat\v^i,\hat f_1^i)-\ml{R}^{ave}(\th;\hat\v^i,\hat f_1^i),\,i=1,2.
\end{equation*}
It follows from \eqref{bc5_def_f1} and \eqref{def_R5} that
\begin{equation*}
f_1^1-f_1^2=\frac{1}{2\pi}\int_{0}^{2\pi}(F^1-F^2)(s)(2\pi-s)\,ds
+\int_{0}^{\th}(F^1-F^2)(s)\,ds,
\end{equation*}
and
\begin{equation*}
\begin{split}
\|f_1^1-f_1^2\|_{C^{1,\al}}&\leq C\|F^1-F^2\|_{C^{\al}}\leq 
C\|\ml{R}(\th;\hat\v^2,\hat f_1^2)-\ml{R}(\th;\hat\v^2,\hat f_1^2)\|_{C^{\al}}\\
&\leq C\bigg(
\|\om^1-\om^2\|_{C^{\al}}+
\bigg\|\frac{\p_\th p_1}{1+\hat f_1^1}-\frac{\p_\th p_1}{1+\hat f_1^2}\bigg\|_{C^{\al}}+
\bigg\|\frac{v_\th^1\p_\th v_\th^1}{1+\hat f_1^1}-\frac{v_\th^2\p_\th v_\th^2}{1+\hat f_1^2}\bigg\|_{C^{\al}}
\bigg)\\
&\leq C\big(
\|\om^1-\om^2\|_{C^{\al}}+\vep\|\hat f_1^1-\hat f_1^2\|_{C^\al}
+\vep\|\hat\v^1-\hat\v^2\|_{C^{1,\al}}
\big).
\end{split}
\end{equation*}

The above inequality together with \eqref{bc5_est_diff_om} and \eqref{bc5_est_diff_V} yields
\begin{equation}
\begin{split}
\|V^1-V^2\|_{C^{1,\al}}\leq C\vep
\|\hat\v^1-\hat\v^2\|_{C^\al}+C\vep\|\hat f_1^1-\hat f_1^2\|_{C^\al}
\leq C\vep \|\hat V^1-\hat V^2\|_{C^{1,\al}}.
\end{split}
\end{equation}
Consequently, there exists a small enough $\vep_0>0$  such that for all $0<\vep<\vep_0$, the mapping $\ml{T}$ becomes a contraction on $C^{1,\al}(\Om)$. By Banach's fixed point theorem, we conclude that T admits a unique fixed point $V=(\v,f_1)\in \ml{V}_\de$.

{\bf Step 3.} The pressure $p$ can be constructed following exactly the same procedure as {\bf Step 3} in the proof of Theorem \ref{thm4}, and therefore we omit the details here.

{\bf Step 4.} We only show that if $(\v,f_1)\in \ml{V}_\de$ is a fixed point of the mapping $\ml{T}$ defined in \eqref{bc5_def_T}, then $(\u,p)$ is a solution to \eqref{InE_Pol} with boundary condition \eqref{BC5}. The core task amounts to verifying the boundary condition of pressure on the outer circle, as all other requirements have been satisfied.

Since $(\v,f_1)$ is a fixed point of the mapping $\ml{T}$, we obtain
\begin{equation}\label{f1_p1}
\begin{split}
\p_\th f_1(\th)&=\ml{R}(\th;\v,f_1)-\ml{R}^{ave}(\th;\v,f_1)\\
&=-r_1^2\om(r_1,\th)-\frac{r_1^2}{1+ f_1}\p_\th p_1(\th)-\frac{r_1^2}{2(1+ f_1)}\p_\th |v_\th(r_1,\th)|^2-\ml{R}^{ave}\\
&=-\big[r^2\p_ru_\th+r u_\th+r\frac{u_\th}{u_r}\p_\th u_\th\big]_{r=r_1}
+\p_\th f_1(\th)
-\frac{r_1}{u_r}\p_\th p_1(\th)-\ml{R}^{ave}
\end{split}
\end{equation}
by using \eqref{bc5_def_f1}.

On the other hand, the second equation in \eqref{eq_G_p} gives
\begin{equation}\label{p_r1}
\begin{split}
-\p_{\th}p(r_1,\th)=r_1G_\th(r_1,\th) =\big[r_1u_r\p_ru_\th +u_\th\p_\th u_\th+u_ru_\th\big]_{r=r_1}
\end{split}
\end{equation}

Combining \eqref{f1_p1} and \eqref{p_r1}, we yield 
\begin{equation}\label{p1_R}
\p_\th p(r_1,\th)-\p_\th p_1(\th)=\frac{u_r(r_1,\th)}{r_1}\ml{R}^{ave}.
\end{equation}

Integrating the identity from $0$ to $2\pi$ gives
\begin{equation}\label{integ=0}
0= r_1^2\int_0^{2\pi}\p_\th(p(r_1,\th)-p_1(\th))d\th=\ml{R}^{ave}\int_0^{2\pi}(1+r_1v_r)d\th.
\end{equation}

Owing to the smallness of $\|\v\|_{C^{2,\al}}$, we conclude that $\ml{R}^{ave}$ must equal zero. Thus, \eqref{p1_R} is changed into
\begin{equation*}
\p_{\th}p(r_1,\th)=\p_{\th}p_0(\th),
\end{equation*}
which precisely corresponds to the required boundary condition of $p$ in \eqref{BC5}.
\end{proof}

If we replace the final boundary condition of $p$ in \eqref{BC5} with a Dirichlet boundary condition, resulting in the modified boundary data,
\begin{equation}\tag{BC5$'$}\label{BC5'}
\begin{cases}
r_0u_r(r_0,\th)=1+f_0(\th),\\
\int_{r_0}^{r_1}u_\th(r,0)\,dr=j_0,\\
p(r_0,\th)=-\frac{1}{2 r_0^2}+p_0(\th),\\
p(r_1,\th)=-\frac{1}{2 r_1^2}+p_1(\th),
\end{cases}
\end{equation}
the well-posedness remains valid provided that $p_1(\th)$ satisfies an appropriate compatibility condition. Similar to Theorem \ref{thm3'}, we introduce the trace operator $\ga^{BC5}_1$ as
\begin{equation*}
(\ga^{BC5}_1p)(\th)=p(r_1,\th),
\end{equation*}
where $p$ is the pressure derived from Theorem \ref{thm5}. We present without proof the following theorem regarding the solvability of the Euler equations subject to boundary condition \eqref{BC5'}.

\begin{thmp}
Let $f_0(\th),p_0(\th),p_1(\th)\in C^{2,\al}(\mb{T}_{2\pi})$ and the constant $j_0\in\R$.
There exists a small constant $1>\vep_0>0$ such that for all $\vep\in(0,\vep_0]$, if
\begin{equation*}
\|f_0\|_{C^{1,\al}}+\|f_1\|_{C^{1,\al}}+\|p_0\|_{C^{1,\al}}+|j_0|\leq \vep,
\end{equation*}
then the steady incompressible Euler equation \eqref{InE_Pol} with the boundary condition \eqref{BC5'} has a unique solution  $(\u,p)\in (C^{2,\al}(\Om))^3$ if and only if $p_1(\th)$ satisfies the following compatibility condition
\begin{equation*}
p_1(0)=(\ga_1^{BC5}p)(0)+\frac{1}{2 r_1^2}.
\end{equation*}
\end{thmp}

\subsection{Alternative approach for boundary condition \texorpdfstring{\eqref{BC1*},\,\eqref{BC2*} and \eqref{BC3}}{}}\label{sec3.3}

In this subsection, we prove Theorem \ref{thm3} again by utilizing the vorticity transport approach. First, it is convenient to rewrite the boundary condition \eqref{BC1} as follows:
\begin{equation}\tag{BC1$^*$}\label{BC1*}
\begin{cases}
r_0u_r(r_0,\th)=1+f_0(\th),\\
r_1u_r(r_1,\th)=1+f_1(\th),\\
\frac{1}{2}|\u(r_0,\th)|^2+p(r_0,\th)=b_0(\th),\\
\int_{r_0}^{r_1} u_\th(r,0)\,dr=j_0.
\end{cases}
\end{equation}
We have the following theorem.
\begin{theorem}\label{thm1*}
Let $f_0(\th),f_1(\th),b_0(\th)\in C^{2,\al}(\mb{T}_{2\pi})$ for all $\th\in\mb{T}_{2\pi}$. Then the steady incompressible Euler equation \eqref{InE_Pol} admits a unique solution $(\u,p)\in (C^{2,\al}(\Om))^3$ satisfying the boundary condition \eqref{BC1*} provided
\begin{equation*}
\|f_0\|_{C^{1,\al}}+\|f_1\|_{C^{1,\al}}+\|b_0\|_{C^{1,\al}}+|j_0|\leq \vep,
\end{equation*}
for all $\vep>0$ small enough.

\end{theorem}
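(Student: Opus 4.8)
The plan is to reprove Theorem~\ref{thm1*} by the vorticity transport scheme of Theorem~\ref{thm4}, which applies almost verbatim; I will only spell out the two features that distinguish \eqref{BC1*} from \eqref{BC4}. Writing $\u=\bar\u+\v$ with $\bar\u=\frac1r\e_r$, I iterate $\hat\v\mapsto\om\mapsto\v$ on the ball $V_\de=\{\v\in C^{2,\al}_\star(\Om):\|\v\|_{C^{2,\al}}\le\de\}$ with $\de=O(\vep)$. The crucial simplification is that the Bernoulli datum on the inner circle pins down the initial vorticity once and for all, \emph{independently of the iterate}: from the angular component of the Bernoulli relation $\nabla_c B=\u\times(\om\e_z)$ one reads off $\p_\th B=-r u_r\,\om$, so evaluating at $r=r_0$ with $B(r_0,\cdot)=b_0$ and $r_0u_r(r_0,\cdot)=1+f_0$ gives
\begin{equation*}
\om_0(\th)=-\frac{b_0'(\th)}{1+f_0(\th)}.
\end{equation*}
Unlike \eqref{bc4_om0}, this $\om_0$ does not involve $\hat v_\th$, and unlike \eqref{BC5} the outer normal datum $f_1$ is prescribed; hence no auxiliary unknown is needed and the iteration is on $\v$ alone.

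First I would run the analogue of Step~1 of Theorem~\ref{thm4}: for fixed $\hat\v\in V_\de$ solve the transport problem \eqref{bc4_eq_om} with the above $\om_0$ by Lemma~\ref{lem_trans} to get $\om\in C^{1,\al}$ with $\|\om\|_{C^{1,\al}}\le C\vep$, then solve the div-curl system \eqref{bc4_eq_DV} with data $f_0,f_1,j_0$ (invoking the flux compatibility $\int_0^{2\pi}f_0=\int_0^{2\pi}f_1$ required by \eqref{cond_flux_equi}) via Lemma~\ref{lem_div_curl} to obtain $\v=\ml{T}\hat\v$ with $\|\v\|_{C^{2,\al}}\le C\vep$; choosing $\de=2C\vep$ makes $\ml{T}:V_\de\to V_\de$. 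The contraction step is in fact cleaner than in Theorem~\ref{thm4}: since $\om_0^1=\om_0^2$, the first term in \eqref{est_diff_om} drops out and leaves $\|\om^1-\om^2\|_{C^\al}\le C\vep\|\hat\v^1-\hat\v^2\|_{C^\al}$, after which \eqref{est_div_curl} with $k=1$ yields $\|\ml{T}\hat\v^1-\ml{T}\hat\v^2\|_{C^{1,\al}}\le C\vep\|\hat\v^1-\hat\v^2\|_{C^{1,\al}}$. Banach's theorem gives a unique fixed point $\v\in V_\de$.

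The pressure is built as in Step~3 of Theorem~\ref{thm4}: with $\mf{G}$ defined by \eqref{def_G} one has $\curl_c\mf{G}=0$, so $g(z)=-\int_{\ga_z}\mf{G}\cdot d\mf{l}$ as in \eqref{def_g} and $p=g+g_0$. The only point needing separate checking is single-valuedness of $g$ on the non-simply-connected annulus, i.e. vanishing of the circulation $\int_0^{2\pi}r_0G_\th(r_0,\th)\,d\th$. Using $\mf{G}=\tfrac12\nabla_c|\u|^2-\u\times(\om\e_z)$ together with the formula for $\om_0$, one finds $r_0G_\th(r_0,\th)=\tfrac12\p_\th|\u(r_0,\th)|^2-b_0'(\th)$, whose integral over a period is zero; this plays the role that $r_0G_\th(r_0,\th)=-\p_\th p_0$ in \eqref{deri_p0} plays for \eqref{BC4}. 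The additive constant is fixed by setting $g_0=b_0(0)-\tfrac12|\u(r_0,0)|^2$.

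The only genuinely new point, and the main thing to carry out with care, is recovering the inner Bernoulli condition $B(r_0,\th)=b_0(\th)$ as a \emph{full} identity rather than merely its $\th$-derivative. From $\mf{G}+\nabla_c p=0$ one gets $\p_\th B(r_0,\th)=-r_0u_r(r_0,\th)\,\om_0(\th)=b_0'(\th)$, so $B(r_0,\th)-b_0(\th)$ is constant in $\th$; the choice of $g_0$ above forces that constant to vanish at $\th=0$, giving $B(r_0,\cdot)=b_0$. The remaining data $r_iu_r(r_i,\cdot)=1+f_i$ and $\int_{r_0}^{r_1}u_\th(r,0)\,dr=j_0$ come directly from the div-curl construction, while incompressibility and the momentum equations hold by construction, so $(\u,p)\in(C^{2,\al}(\Om))^3$ solves \eqref{InE_Pol} with \eqref{BC1*}. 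The converse implication repeats Step~4 of Theorem~\ref{thm4}: the Bernoulli relation recovers $\om_0=-b_0'/(1+f_0)$ at $r=r_0$, and uniqueness for the div-curl problem yields $\v=\ml{T}\v$, which establishes uniqueness of the solution.
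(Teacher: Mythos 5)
Your proposal is correct and follows essentially the same route as the paper: the vorticity transport iteration of Theorem \ref{thm4} with the iterate-independent initial vorticity $\om_0=-b_0'/(1+f_0)$, followed by the construction of $p$ from the curl-free field $\mf{G}$ and verification of the inner Bernoulli condition by integrating $\p_\th\bigl(\tfrac12|\u|^2+g\bigr)=-ru_r\om$ in $\th$ and fixing the additive constant. Your explicit check that the circulation $\int_0^{2\pi}r_0G_\th(r_0,\th)\,d\th$ vanishes (so that $g$ is single-valued on the annulus) is a detail the paper carries out only in the proof of Theorem \ref{thm4} and leaves implicit here, but it matches the intended argument.
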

\begin{proof}
The iteration scheme developed in the proof of Theorem \ref{thm4} still works if the initial condition of the transport type problem \eqref{bc4_eq_om} is given by
\begin{equation}\label{bc1*_om_0}
\om_0(\th)=-\frac{\p_{\th}b_0(\th)}{1+f_0(\th)}\in C^{1,\al}(\mb{T}_{2\pi}).
\end{equation}

In this setting, the unique existence of a fixed point $\v\in C^{2,\al}(\Om)$ can be verified straightforwardly. Therefore, we focus our discussion on the construction of the pressure $p$, and verification of the Bernoulli function's boundary condition at the inner circle.

From definitions \eqref{def_G} and \eqref{def_g}, together with the relation
\begin{equation*}
\mf{G}+\nabla_c g=0,
\end{equation*}
we derive through the second component that
\begin{equation}\label{bc1*_ber_deri}
\p_{\th}(\frac{1}{2}|\u|^2+g)=-ru_r\om.
\end{equation}

Integrating \eqref{bc1*_ber_deri} over the interval $(0,2\pi)$, it follows by using \eqref{bc1*_om_0} that
\begin{equation*}
(\frac{1}{2}|\u|^2+g)(r_0,\th)-\frac{1}{2}|\u|^2(r_0,0)=b_0(\th)-b_0(0).
\end{equation*}

We thus examined the boundary condition
\begin{equation}\label{bc1*_ber_r0}
(\frac{1}{2}|\u|^2+p)(r_0,\th)=b_0(\th)
\end{equation}
through the pressure definition:
\begin{equation}\label{bc1*_def_p}
p(r,\th)=g(r,\th)-\frac{1}{2}|\u(r_0,0)|^2+b_0(0).
\end{equation}

The proof is completed.
\end{proof}

Similarly, to show Theorem \ref{thm2} by the vorticity transport method, we represent the boundary condition \eqref{BC2} as
\begin{equation}\tag{BC2$^*$}\label{BC2*}
\begin{cases}
r_0u_r(r_0,\th)=1+f_0(\th),\\
\frac{1}{2}|\u(r_0,\th)|^2+p(r_0,\th)=b_0(\th),\\  
\frac{1}{2}|\u(r_0,\th)|^2+p(r_0,\th)=(b_0\circ T)(\th),
\end{cases}
\end{equation}
where $T:\mb{T}_{2\pi}\to \mb{T}_{2\pi}$ is a given orientation-preserving diffeomorphism with $C^{3,\al}$ regularity.

\begin{theorem}
Let $f_0(\th),b_0(\th)\in C^{1,\al}(\mb{T}_{2\pi})$. There exists a small constant $1>\vep_0>0$ such that for all $\vep\in(0,\vep_0]$, if
\begin{equation*}
\|T(\th)-\th\|_{C^{3,\al}}+\|f_0\|_{C^{2,\al}}+\|b_0\|_{C^{2,\al}} \leq \vep,
\end{equation*}
then the steady incompressible Euler equation \eqref{InE_Pol} with the boundary condition \eqref{BC2*} admits a unique solution $(\u,p)\in (C^{1,\al}(\Om))^3$.
\end{theorem}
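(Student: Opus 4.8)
The plan is to run the vorticity transport scheme exactly as in the proofs of Theorem~\ref{thm4} and Theorem~\ref{thm1*}, the only genuinely new ingredient being that the outer radial flux $f_1$ is now dictated by the diffeomorphism $T$ instead of being prescribed directly. Since the inner data again furnish the Bernoulli function $\tfrac12|\u|^2+p=b_0$, the relation \eqref{bc1*_ber_deri} forces the initial vorticity to be
\begin{equation*}
\om_0(\th)=-\frac{\p_\th b_0(\th)}{1+f_0(\th)}\in C^{1,\al}(\mb{T}_{2\pi}),
\end{equation*}
which is completely determined by the data and of size $O(\vep)$. To fix $f_1$ I would invoke the Grad--Shafranov picture behind Proposition~\ref{prop1}: with $\phi_0(\th)=\int_0^\th(1+f_0(s))\,ds$ the inner stream-function trace and $\p_\th\phi=z_1u_r$ from \eqref{def_strm_fun}, I declare the outer trace to be $\phi_1=\phi_0\circ T$, i.e.
\begin{equation*}
1+f_1(\th)=\big(1+f_0(T(\th))\big)T'(\th),\qquad j_0:=\phi_0(0)-\phi_0(T(0)).
\end{equation*}
Because $T$ is an orientation-preserving torus diffeomorphism, the substitution $s=T(\th)$ gives $\int_0^{2\pi}f_1=\int_0^{2\pi}f_0=:J_0$, so the compatibility condition \eqref{cond_flux_equi} of Lemma~\ref{lem_div_curl} holds; moreover $\|f_1\|_{C^{2,\al}}=O(\vep)$ and $|j_0|=O(\vep)$. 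This explicit (division-free) determination of $f_1$ is what distinguishes \eqref{BC2*} from the iterative reconstruction needed for \eqref{BC5}.

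With $\om_0$, $f_1$ and $j_0$ now frozen by the data, the iteration is literally that of Theorem~\ref{thm4}: fixing $\hat\v\in V_\de$, I solve the transport problem \eqref{bc4_eq_om} (with this $\om_0$) for $\om$ via Lemma~\ref{lem_trans}, then solve the linear div--curl system for $\v$ via Lemma~\ref{lem_div_curl}, and set $\ml{T}\hat\v=\v$. Since $\om_0$ does not depend on $\hat\v$, the difference estimate \eqref{est_diff_om} collapses to $\|\om^1-\om^2\|_{C^\al}\le C\|\om_0\|_{C^{1,\al}}\|\hat\v^1-\hat\v^2\|_{C^\al}\le C\vep\|\hat\v^1-\hat\v^2\|_{C^{1,\al}}$, and the linearity of the div--curl system together with \eqref{est_div_curl} (for $k=1$) makes $\ml{T}$ a contraction on $C^{1,\al}(\Om)$ for $\vep$ small. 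Banach's theorem then yields a unique fixed point $\v\in V_\de$, whence $\u=\bar\u+\v\in C^{2,\al}(\Om)$.

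The pressure is built exactly as in Step~3 of Theorem~\ref{thm4} and Theorem~\ref{thm1*}: define $g$ by \eqref{def_g}, so that $\mf{G}+\nabla_c g=0$ by \eqref{eq_G_g}, and set $p=g-\tfrac12|\u(r_0,0)|^2+b_0(0)$ as in \eqref{bc1*_def_p}. The choice of $\om_0$ makes $\p_\th B(r_0,\th)=-(1+f_0)\om_0=\p_\th b_0$, and the normalising constant pins $B(r_0,0)=b_0(0)$; integrating then gives the inner Bernoulli condition $B(r_0,\th)=b_0(\th)$.

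The crux, and the step I expect to be the main obstacle, is verifying the outer Bernoulli condition $B(r_1,\th)=b_1(\th)=b_0(T(\th))$, which is where the choice $\phi_1=\phi_0\circ T$ must pay off. The momentum equations \eqref{eq_G_p}, whose $\e_\th$-component is \eqref{bc1*_ber_deri}, give $\nabla_c B=\om(u_\th\e_r-u_r\e_\th)$ and hence $\u\cdot\nabla_c B=0$, so $B$ is constant along streamlines; consequently $B(z)=b_0(\th_0(z))$, where $\th_0(z)$ is the inner endpoint of the streamline through $z$. It therefore suffices to prove that the Lagrangian origin map satisfies $\th_0(r_1,\th)=T(\th)$. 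This I would extract from the level-set structure of the stream function $\phi$ (the solution of the weighted elliptic problem with traces $\phi_0,\phi_1$): the streamline at level $c$ meets the inner circle at $\phi_0^{-1}(c)$ and the outer circle at $\phi_1^{-1}(c)=T^{-1}(\phi_0^{-1}(c))$, so an outer point $\th$ traces back precisely to $T(\th)$, the matching additive constant being encoded in the prescribed $j_0$. Two points require care: first, that $u_r=\tfrac1r+v_r>0$ throughout $\Om$—guaranteed by $\|\v\|_{C^{2,\al}}\le\de\ll1$—so that every outer streamline reaches the inner circle bijectively with no stagnation or recirculation, and that $\phi_0,\phi_1$ are strictly monotone (from $1+f_0,1+f_1>0$) hence invertible; second, that the constant in $\phi_1$ is exactly the one realising $\th_0(r_1,\cdot)=T$, which is the role of $j_0$. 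Granting these, $B(r_1,\th)=b_0(T(\th))=b_1(\th)$. Uniqueness then follows as in the ``conversely'' part of Theorem~\ref{thm4}: any solution in the perturbative ball must have inner vorticity $\om_0$ given by \eqref{bc1*_om_0}, and the transported-Bernoulli identification $b_0(\th_0(r_1,\th))=b_0(T(\th))$ forces $\phi_1=\phi_0\circ T$, hence the same $f_1$ and $j_0$; it is thus the unique fixed point of $\ml{T}$.
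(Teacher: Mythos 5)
Your proposal is correct and follows essentially the same route as the paper: the same initial vorticity $\om_0=-\p_\th b_0/(1+f_0)$, the same determination $1+f_1=(1+f_0\circ T)\,T'$ of the outer flux from the diffeomorphism (the paper encodes your $j_0$-normalisation directly in the Dirichlet trace $\vphi(r_1,\th)$ of the auxiliary elliptic problem), the same transport/div--curl contraction, and the same pressure construction \eqref{bc1*_def_p}. Your streamline argument for the outer Bernoulli condition is exactly the content of the paper's appeal to \eqref{check_B}, namely that $B$ is constant on level sets of the stream function so $B(r_1,\th)=b_0(\phi_0^{-1}(\phi_1(\th)))=b_0(T(\th))$, just spelled out in Lagrangian terms.
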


\begin{proof}
Let $V_\de$ be the iteration space defined in \eqref{bc4_def_V}. For given $\hat\v\in V_\de$, let $\om$ denote the unique solution to the transport type problem \eqref{bc4_eq_om}, where the initial condition is replaced by 
\begin{equation}\label{bc2*_om_0}
\om_0(\th)=-\frac{\p_{\th}b_0(\th)}{1+f_0(\th)}\in C^{1,\al}(\mb{T}_{2\pi}).
\end{equation}

It follows from \eqref{est_om_1_al} that
\begin{equation}\label{bc2*_est_om}
\begin{split}
\|\om\|_{C^{1,\al}}\leq C\|\om_0\|_{C^{1,\al}}\leq C(1+\|f_0\|_{C^{1,\al}})^2\|b_0\|_{C^{2,\al}}.
\end{split}
\end{equation}

In order to deal with the div-curl system, we define a $2\pi$-periodic function $f_1(\th)$ 
\begin{equation}\label{bc2*_def_f1}
f_1(\th)=-1+T'(\th)+f_0(T(\th))T'(\th)\in C^{2,\al}(\Om).
\end{equation}

Then, we claim that the following div-curl problem
\begin{equation}\label{bc2*_div_curl}
\begin{cases}
\frac{1}{r}\p_r(r v_\th)-\frac{1}{r}\p_\th v_r=\om,\\
\frac{1}{r}\p_r(r v_r)+\frac{1}{r}\p_{\th}v_\th=0,\\
r_iv_r(r_i)=f_i(\th),\,i=0,1.
\end{cases}
\end{equation}
has a unique solution $\v\in {C^{2,\al}}(\Om)$. 

Indeed, by the standard theory of elliptic equations, the following boundary value problem
\begin{equation*}
\begin{cases}
-\Delta_c\vphi=\om,\\
\vphi(r_0,\th)=\int_0^{\th} f_0(s)\,ds-\bar J_0\th,\\
\vphi(r_1,\th)=-\th+T(\th)+\int_0^{T(\th)}f_0(s)\,ds-\bar J_0\th,\\
\vphi(r,\th+2\pi)=\vphi(r,\th),
\end{cases}
\end{equation*}
admits a unique solution $\vphi\in C^{3,\al}(\Om)$ satisfying
\begin{equation*}
\|\vphi\|_{C^{k+1,\al}}\leq C\big(\|T(\th)-\th\|_{C^{k+1,\al}}+\|T\|_{C^{k+1,\al}}\|f_0\|_{C^{k,\al}}+
\|f_0\|_{C^{k,\al}}+\|\om\|_{C^{k-1,\al}}\big),
\end{equation*}
for $k=1,2$ and some positive constant $C>0$.

The solution $\v$ to the div-curl problem \eqref{bc2*_div_curl} is constructed by
\begin{equation*}
rv_r=\p_{\th}\vphi+\bar J_0 \th,\,v_\th=-\p_r\vphi.
\end{equation*}

Moreover, $\v$ satisfies the following estimate:
\begin{equation}\label{est_bc2*_v}
\begin{split}
\|\v\|_{C^{k,\al}}\leq C \big(\|T(\th)-\th\|_{C^{k+1,\al}}+
\|f_0\|_{C^{k,\al}}+\|\om\|_{C^{k-1,\al}}\big),\,k=1,2.
\end{split}
\end{equation}

By using \eqref{bc2*_est_om} and \eqref{est_bc2*_v}, it's valid to define the operator $\ml{T}:V_\de\to V_\de$ as $\ml{T}\hat\v=\v$. The contraction property of the mapping $\ml{T}$ follows directly from \eqref{est_diff_om}. For brevity, we omit the detailed verification here.

With the pressure function $p$ defined in \eqref{bc1*_def_p}, we immediately obtain that the Bernoulli function satisfies its boundary condition at the inner circle, while the boundary data on the outer circle is ensured by the formula \eqref{check_B}. This completes the proof of the theorem.
\end{proof}

We now prove Theorem \ref{thm3} using the vorticity transport method.

{\bf Proof of Theorem \ref{thm3}.} The iteration scheme follows essentially the same construction as in the proof of Theorem \ref{thm5}, with the only modification being the initial condition for the transport equation \eqref{bc5_eq_om}, which is now specified as
\begin{equation}\label{bc3_om0}
\om_0(\th)=-\frac{\p_\th b_0(\th)}{1+f_0(\th)}.
\end{equation}

Through a line-by-line adaptation of {\bf Step 1-2} in the proof of Theorem \ref{thm5}, we derive that there is a unique fixed point $(\v,f_1)$. Let the pressure $p$ be determined by \eqref{bc1*_def_p}. Then, the boundary condition for the Bernoulli function in \eqref{BC3} is automatically satisfied by construction. The boundary condition for the pressure derivative in \eqref{BC3},
\begin{equation*}
\p_\th p(r_1,\th)=\p_\th p_1(\th),
\end{equation*}
is verified by analogous arguments to those developed in {\bf Step 4} of the proof of Theorem \ref{thm5}, thereby completing our proof.

\vspace{10pt}

{\bf Data Availability Statement.} No data, models or code were generated or used during the study.

{\bf Conflict of interest.} On behalf of all authors, the corresponding author states that there is no conflict of interests.


\bigskip

\end{document}